\theoremstyle{plain}
\newtheorem{theorem}{Theorem}
\newtheorem{lemma}{Lemma}
\newtheorem{prop}{Proposition}
\theoremstyle{remark}
\newtheorem{remark}{Remark}
\newtheorem{example}{Example}
\newcommand{\defcolon}{\;\raisebox{.075ex}{:}\!\!=}
\title{Local Type III metrics with holonomy in $\mathrm{G}_2^*$}
\author{Christian Volkhausen}
\address{Christian Volkhausen, Institut für Mathematik und Informatik, Universität Greifs\-wald, Walther-Rathenau-Straße 47, 17489 Greifswald, Germany}
\curraddr[]{Max-Planck-Institut für Plasmaphysik, Wendelsteinstraße 1, 17491 Greifs\-wald, Germany}
\email{christian.volkhausen@ipp.mpg.de}
\thanks{I am very grateful to Ines Kath for introducing me into the field of holonomy theory as well as her support and useful advises on this article.}
\keywords{Holonomy, pseudo-Riemannian manifold, exterior differential systems, exceptional Lie group}
\subjclass[2000]{Primary 53C29; Secondary  53C50, 53C10}
\begin{document}
\maketitle
\begin{abstract}
Fino and Kath determined all possible holonomy groups of seven-dimensional pseu\-do-Rie\-man\-nian manifolds contained in the exceptional, non-compact, simple Lie group $\mathrm{G}_2^*$ via the corresponding Lie algebras. They are distinguished by the dimension of their maximal semi-simple subrepresentation on the tangent space, the socle. An algebra is called of Type I, II or III if the socle has dimension 1, 2 or 3 respectively. This article proves that each possible holonomy group of Type III can indeed be realized by a metric of signature $(4,3)$.  For this purpose, metrics are explicitly constructed, using Cartan's methods of exterior differential systems, such that the holonomy of the manifold has the desired properties.

\end{abstract}

\section{Introduction}
\label{intro}
Holonomy theory is a very rich field of research between differential geometry and algebra for about a century. When Cartan classified Riemannian symmetric spaces in 1926 \cite{Cartan1926}, he thereby also classified the irreducible holonomy groups of such spaces. Irreducible here means that the natural representation of the holonomy group on the tangent space, the \textit{holonomy representation}, does not leave invariant any proper subspaces. Thirty years later Berger classified simply-connected semi-Riemannian manifolds with irreducible holonomy which are not locally symmetric \cite{Berger1955}. The result is the well known Berger list for Riemannian and pseudo-Riemannian manifolds. In 1987, Bryant finally showed that manifolds with holonomy $\mathrm{G}_2$ and $\mathrm{Spin}(7)$ exist\cite{Bryant1987}. Thus, in the Riemannian case each group on Berger's list is indeed realizable by a metric, i.e. there exists a Riemannian metric such that its holonomy group is equal to that one on Berger's list. It follows that the (reduced) holonomy groups of irreducible, not locally symmetric Riemannian manifolds are completely classified.

In the case of pseudo-Riemannian manifolds the situation is more complicate. Since the holonomy representation can have degenerate subspaces, it is not sufficient to classify only irreducible holonomy groups. It is necessary to consider \textit{indecomposable} ones as well, where indecomposability means that the holonomy representation does not leave invariant any proper, non-degenerate subspace.

While the holonomy of irreducible pseudo-Riemannian manifolds is also classified by Berger's list, little is known about the holonomy of the indecomposable ones, especially about those with index greater than 2. Holonomy groups of Lorentzian manifolds were classified by Leistner \cite{Leistner2007,Galaev2016}, while Galaev classified those of pseudo-K\"ahlerian manifolds of index 2 \cite{Galaev2016a} and Einstein not Ricci-flat pseudo-Riemannian manifolds \cite{Galaev2018}.  Beside these classifications only some results associated with certain special geometries are known (cf. \cite{Graham2012, Leistner2012, Fino2015}). 

 This article treats of pseudo-Riemannian manifolds with signature $(4,3)$ and their ho\-lo\-no\-my.  Throughout this article, manifolds are assumed to be simply-connected such that holonomy coincide with reduced holonomy and Berger's results are applicable. If a manifold with signature $(4,3)$ has irreducible holonomy, then it is either generic $\mathrm{SO}(4,3)$ or $\mathrm{G}_2^{*}$ by Berger's list. Therefore, it is natural to ask which proper subgroups of $\mathrm{G}_2^{*}$ are holonomy groups, i.e. have indecomposable holonomy representation. As the manifold is simply-connected, the holonomy group is connected and one may equivalently asks which subalgebras of $\mathfrak{g}_2^{*}$ are holonomy algebras, i.e. the Lie algebra of a holonomy group. Here, $\mathfrak{g}_2^{*}$ denotes the Lie algebra of $\mathrm{G}_2^{*}$. There are already some results in this direction, e.g. in \cite{Kath1998} indefinite symmetric spaces were classified, and it turns out that their holonomy algebras are two or three dimensional and abelian. 

The article is based upon results of Fino and Kath in \cite{Fino2016} and is part of a project whose aim is to classify and realize all proper subgroups of $\mathrm{G}_2^{*}$ which are holonomy groups. As said before, the classification is due to the holonomy algebras  $\mathfrak{h}\subset \mathfrak{g}_2^{*} \subset \mathfrak{so}(4,3)$ and up to conjugation in $\mathrm{SO}(4,3)$. Algebraic restrictions on the holonomy algebras arise from indecomposability, i.e. the natural representation of the holonomy algebra $\mathfrak{h}\subset \mathfrak{g}_2^{*} \subset \mathfrak{so}(4,3)$ on the tangent space $T_xM \cong \mathbb{R}^{4,3}$ has to leave invariant an isotropic subspace if $\mathfrak{h}\neq \mathfrak{g}_{2}^{*}$, and the fact that each holonomy algebra has to be a Berger algebra, i.e. satisfies Berger's first criterion. Possible holonomy algebras are further divided into three different types by their maximal semi-simple subrepresentation, the so called \textit{socle}. The algebras are named as of Type I, II or III if the socle has dimension 1, 2 or 3 respectively.  In \cite{Fino2017} it is shown by Fino and Kath that all possible holonomy algebras of Type I can indeed be realised, i.e. for each algebra there exists a metric $g$ with signature $(4,3)$ such that the pseudo-Riemannian manifold $(M,g)$ has holonomy according to the particular algebra. In the present article these investigations are continued and it is shown that the same holds for all algebras of Type III as well. Hence, metrics $g$ will be explicitly constructed such that each Type III algebra is the holonomy algebra of a pseudo-Riemannian manifold $(M,g)$ with signature $(4,3)$.

In the following section a brief review of the characterisation of $\mathrm{G}_2^{*}$ and the algebras of Type III is given. Also,  most of the notation is introduced and some results from \cite{Fino2016}, which are necessary to state the classification theorem for Type III algebras, are provided. Section 3 outlines the construction approach for local metrics with specific holonomy properties using the concept of $H$-structures and Cartan's methods of exterior differential systems. Afterwards, the `machinery' developed in this way is used to prove the main theorem of this article:
\begin{theorem}\label{Thm:Main}
Each possible holonomy algebra of Type III can be realised by a pseudo-Rie\-man\-nian metric of signature $(4,3)$.
\end{theorem}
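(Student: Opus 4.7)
The plan is to proceed algebra-by-algebra through the finite list of Type III holonomy algebras $\mathfrak{h}\subset\mathfrak{g}_2^*$ obtained in the Fino-Kath classification recalled in Section~2, and for each one construct an explicit pseudo-Riemannian metric of signature $(4,3)$ whose holonomy algebra is $\mathfrak{h}$. For each $\mathfrak{h}$, I would fix a basis of $\mathbb{R}^{4,3}$ adapted to the invariant isotropic flag prescribed by the socle (three-dimensional and isotropic in the Type III case) and its annihilator, and declare the sought metric to be given in a coframe $\theta^{1},\dots,\theta^{7}$ compatible with this flag and with the defining three-form of $\mathrm{G}_{2}^{*}$. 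The requirements that the Levi-Civita connection be torsion-free and take values in $\mathfrak{h}$ then translate, on the total space of the corresponding $H$-structure, into an exterior differential system in the sense of Section~3.

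The second step is to apply Cartan's machinery to this EDS. Using the freedom granted by the $\mathfrak{h}$-valued curvature tensors (a space already computed in the course of verifying Berger's first criterion for the Type III algebras in \cite{Fino2016}), I would write down the independent torsion-free curvature components as the unknowns, compute Cartan's characters $s_{1},s_{2},\dots$, and verify Cartan's test. If the system as first written fails to be involutive, I would prolong once (or, if needed, enlarge the system with the first Bianchi identity written out explicitly in the chosen basis) and retest; for Type III algebras the isotropic flag is rather rigid, so the characters should stabilize after at most one prolongation, as happens in the Type I treatment of \cite{Fino2017}. Cartan-Kähler then yields the existence of a local metric $g$ of signature $(4,3)$ whose associated $H$-structure is integrable in the torsion-free sense, so that the holonomy of $g$ is contained in $\mathfrak{h}$.

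The main obstacle, as always in such realisation arguments, is to ensure that the holonomy of the constructed metric is equal to $\mathfrak{h}$ and not a proper subalgebra. By the Ambrose-Singer theorem this is equivalent to asking that the curvature endomorphisms and their iterated covariant derivatives span $\mathfrak{h}$ at a generic point. I would handle this by feeding into the Cartan-Kähler problem initial data rich enough that the curvature tensor at the origin is already a generic element of the $\mathfrak{h}$-valued curvature space, and then observing that the set of points at which $R$ together with finitely many derivatives spans $\mathfrak{h}$ is open; simple-connectedness of a small neighbourhood reduces the problem to the reduced holonomy and allows the conclusion. Checking this genericity condition algebra-by-algebra will be the most delicate part: the Type III list contains several families parametrised by discrete or continuous moduli, so the curvature space is sometimes small, and one must verify by direct inspection that an explicit curvature tensor in it has full $\mathfrak{h}$-span under bracketing with $\nabla R$.

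A secondary difficulty is organisational rather than conceptual: the number of Type III algebras is not small, and the EDS, the torsion analysis, and the spanning argument all have to be carried out for each one. Wherever possible I would group algebras that share the same invariant flag structure and differ only in how the socle sits inside the filtration, so that the EDS and the involutivity computation can be performed once with parameters and specialised later, keeping the proof of Theorem~\ref{Thm:Main} to a manageable length.
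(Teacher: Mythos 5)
Your overall frame -- case-by-case over the Fino--Kath list, an adapted coframe compatible with the invariant flag and the defining $3$-form, torsion-freeness plus an $\mathfrak{h}$-valued connection as an exterior differential system, and Ambrose--Singer to pin down the holonomy -- matches the paper. But the existence mechanism you propose is genuinely different. The paper never computes Cartan characters, never tests involutivity, and never invokes Cartan--K\"ahler. Instead, in each of the eight cases it integrates the first structure equation $\mathrm{d}\eta=-\theta\wedge\eta$ directly: Frobenius gives closed ideals such as $\mathcal{I}(\mathrm{d}x_6,\mathrm{d}x_7)$ containing $b^6,b^7$, pointwise gauge transformations by elements of $H$ (block-diagonal $\mathrm{GL}(2,\mathbb{R})$ and $\exp(h(0,v,0))$, $\exp(h(0,0,y))$) normalise the coframe, and the condition $\theta\in\mathfrak{h}$ then reduces to an explicit first-order PDE system for a handful of functions $r_i,s_i,t_i,u,v$ that is solved by substitution and quadrature. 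This yields the quasi-normal forms of Propositions 1--8 (which are \emph{if and only if} statements, stronger than bare existence), and in each case an explicit solution is exhibited whose curvature, computed directly, spans $\mathfrak{h}$; real-analyticity of these metrics makes the infinitesimal holonomy equal to the holonomy. What your route buys is generality and a uniform existence argument; what it costs is that every step you defer is exactly where the work lies. In particular: (i) you would still have to \emph{prove} involutivity (or control the prolongation) for each of the eight algebras -- your assertion that the characters stabilise after one prolongation ``as happens in the Type I treatment'' is not what \cite{Fino2017} does, which uses the same explicit integration as here, so you cannot lean on that precedent; and (ii) Cartan--K\"ahler gives only abstract local solutions, so the genericity argument ensuring the holonomy is all of $\mathfrak{h}$ rather than a proper subalgebra must be run against curvature data you never see explicitly, whereas the paper simply writes the metric down and checks. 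Note also that a single generic $R\in\mathcal{K}(\mathfrak{h})$ need not have image spanning $\mathfrak{h}$ even though $\underline{\mathfrak{h}}=\mathfrak{h}$ (the Berger condition is about the span over \emph{all} of $\mathcal{K}(\mathfrak{h})$), so your ``rich initial data'' must be chosen with the parametrisation of Table \ref{Tab:RTypIII} in hand, essentially forcing you back to the explicit computations the paper performs.
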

As the proof is constructive, also an example is provided in each case considered. Furthermore, the construction yields (quasi-)normal forms for the metrics. Here, `quasi' refers to some additional conditions imposed on the metrics by certain PDEs. These (quasi-)normal forms can be used to describe the deformation spaces of the metrics. This aspect is not further investigated, as it is beyond the scope of this article. 
%
%
\section{Type III holonomy algebras}
\label{sec:1}
The group $\mathrm{G}_2^{*}$ is strongly related to the exceptional, complex, simple Lie group $\mathrm{G}_2$, which is well known from classical Lie theory. The complex  Lie algebra $\mathfrak{g}_2$ of $\mathrm{G}_2$ has two real forms: $\mathfrak{g}_2^{c}$ and $\mathfrak{g}_2^{*}$, corresponding to the compact group $\mathrm{G}_2^{c}$ and the non-compact $\mathrm{G}_2^{*}$. Each of these groups $\mathrm{G}_2,\,\mathrm{G}_2^c,\,\mathrm{G}_2^{*}$ appears on Berger's list, either for Riemannian or pseudo-Riemannian manifolds \cite{Berger1955,Bryant1987}. 

There are several ways to characterize the group $\mathrm{G}_2^{*}$. Here, an approach via an alternating 3-form is used, which traces back to Engel when he first described the complex group $\mathrm{G}_2$ \cite{Engel1900}. He found that $\mathrm{G}_2$ is the stabiliser of a generic complex 3-form $\omega^{\mathbb{C}}$, i.e. $\omega^{\mathbb{C}}$ has open $\mathrm{GL}(7,\mathbb{C})$-orbit. Passing to the real numbers this orbit splits into two, each of them corresponding to real, generic, $\mathrm{GL}(7,\mathbb{R})$-equivalent 3-forms \cite{Reichel1907, Agricola2008}. One can show that the stabilisers of these forms are exactly $\mathrm{G}_2^{*}$ and $\mathrm{G}_2^{c}$. For any real vector space $V$ one can thus describe $\mathrm{G}_2^{*}$ as the stabiliser of a generic real 3-form $\omega \in \Lambda^{\!3}V^{*}$ on $V$.

 Before elaborating the details, notice some other characterisations based on stabiliser properties of $\mathrm{G}_2^{*}$. It is  also the stabiliser of a non-isotropic element of the real spinor representation of $\mathrm{Spin}(4,3)$ or of an specific cross product on $\mathbb{R}^{4,3}$. Each of these approaches correspond to each other, see \cite{Kath1998} for a detailed description. Furthermore, $\mathrm{G}_2^{*}$ is the automorphism group of the split octonion algebra, which is a more classical characterisation approach.

Now, let $V$ be a 7-dimensional real vector space on which $\mathrm{G}_{2}^{*}$ act as a subgroup of $\mathrm{GL}(V)$. 
 Identify $V \cong \mathbb{R}^7$ and choose the canonical basis as the basis $b_1,\ldots,b_7$ for $V$. Then, $\mathrm{G}_2^{*}$ can be considered as the stabiliser of the real 3-form $\omega$ given by
\begin{align*}
\omega \equiv \sqrt{2} (b^{167} + b^{235}) - b^4 \wedge (b^{15}+b^{26}+b^{37}) \hspace{.5em},
\end{align*}
where $b^1,\ldots,b^7$ denotes the dual basis and $b^{ijk} = b^i \wedge b^j \wedge b^k$. This 3-form induces an inner product on $V$  by 
\begin{align}
 \langle X,Y\rangle &= \frac{1}{6} (\iota_{X}(\omega)) \wedge (\iota_{Y}(\omega) \wedge \omega) \in \Lambda^{\!7}V \cong \Lambda^{\!7} \mathbb{R}^{7} \cong \mathbb{R}\nonumber\\
&= \left(2(b^1\cdot b^5 + b^2 \cdot b^6 + b^3\cdot b^7) - (b^4)^2\right)(X,Y)\label{Eq:Metric} \hspace{.5em},
\end{align}
where $\iota$ is the interior product. See \cite{Bryant1987} for details. With respect to this inner product consider $\mathrm{G}_2^{*}$ as a subgroup of $\mathrm{SO}(4,3)$, now under the identification $V \cong \mathbb{R}^{4,3}$. Elements of the Lie algebra $\mathfrak{g}_2^{*}$ are then given by matrices of the form
\begin{align}\label{Eq:MatrixElements}
\begin{pmatrix}
s_1+s_4 & -s_{10} & s_9 & \sqrt{2}s_6 & 0 & -s_{11} & -s_{12} \\
-s_8 & s_1& s_2 & \sqrt{2}s_9  & s_{11}& 0 & s_6 \\
s_7 & s_3 & s_4 & \sqrt{2}s_{10}  &s_{12}& -s_6 & 0 \\
\sqrt{2}s_5 & \sqrt{2}s_7 & \sqrt{2}s_8 & 0 & \sqrt{2} s_6& \sqrt{2}s_9 &\sqrt{2}s_{10} \\
0 & s_{13} & s_{14} & \sqrt{2}s_5 &-s_1-s_4 & s_8 &-s_7 \\
-s_{13} & 0 & -s_5 & \sqrt{2}s_7 & s_{10} &-s_1  & -s_3 \\
-s_{14} & s_5 & 0 & \sqrt{2}s_8 & -s_9 & -s_2 & -s_4  \\
\end{pmatrix}
\end{align}
with $s_i \in \mathbb{R}$. The Lie algebra $\mathfrak{g}_2^{*}$ also exhibit the structure of a $|3|$-graded Lie algebra. Thus, it exists a direct sum vector space decomposition 
\begin{align*}
\mathfrak{g}_{2}^{*} = \mathfrak{g}_{-3} \oplus \mathfrak{g}_{-2} \oplus \mathfrak{g}_{-1} \oplus \mathfrak{g}_0 \oplus \mathfrak{g}_1 \oplus \mathfrak{g}_2 \oplus \mathfrak{g}_3
\end{align*}
such that $\left[\mathfrak{g}_i,\mathfrak{g}_j\right] \subset \mathfrak{g}_{i+j}$. The components $\mathfrak{g}_i$ can simply be computed from the root diagram of $\mathfrak{g}_{2}^{*}$ (cf. \cite{Fulton2004}), e.g. $\mathfrak{g}_0$ corresponds to the subalgebra generated by the longer primitive root vector, its negative and the (canonical) Cartan subalgebra. Since $\mathfrak{g}_0 \oplus \ldots \oplus \mathfrak{g}_3$ also contains the smaller primitive root vector, it contains a Borel subalgebra of $\mathfrak{g}_2^{*}$.

\begin{remark}
The Lie algebra $\mathfrak{g}_{2}^{*}$ also carries  the structure of a $|2|$-graded  Lie algebra. In this case $\mathfrak{g}_0$ is generated by the shorter primitive root vector, its negative and the Cartan subalgebra. This grading will not be considered here further, but it is the  appropriate one when dealing with Type II algebras.
\end{remark}

With respect to the metric $g$, the line $\mathbb{R}\cdot b_1$ is isotropic. The subalgebra $\mathfrak{p}_1 \subset \mathfrak{g}_2^{*}$, which is the stabiliser of this isotropic line, is given by 
\begin{align*}
\mathfrak{p}_1 = \left\lbrace m \in \mathfrak{g}_2^{*} \,\middle|\; s_i = 0 \; \text{for}\; i \in \{5,7,8,13,14\} \right\rbrace = \mathfrak{g}_0 \oplus \mathfrak{g}_1 \oplus \mathfrak{g}_2 \oplus \mathfrak{g}_3 \hspace{.5em}.
\end{align*}
Note that $\mathfrak{p}_1$ is parabolic. 

The algebraic considerations above can be used to describe manifolds with holonomy contained in $\mathrm{G}_2^{*}$. Let $M$ be smooth manifold with $\dim M = 7$. The vector space $V$ is now taken to be the tangent space at some $x \in M$, i.e. $V = T_xM$. Next, consider the principal frame bundle $\mathrm{GL}(M)$ over $M$ with structure group $\mathrm{GL}(7,\mathbb{R})$. A $\mathrm{G}_{2}^{*}$-\textit{structure} on $M$ is a reduction of $\mathrm{GL}(M)$ along the inclusion $\mathrm{G}_{2}^{*} \hookrightarrow \mathrm{GL}(7,\mathbb{R})$.  By the considerations above, $\mathrm{G}_{2}^{*}$ defines a metric $g$ on $M$ via the inner product. Thus, $\mathrm{G}_{2}^{*} \subset \mathrm{SO}(4,3)$ and the reduction can be thought  along the inclusion $\mathrm{G}_{2}^{*} \hookrightarrow \mathrm{SO}(4,3)\hookrightarrow \mathrm{GL}(7,\mathbb{R})$. In this case, it is known (cf. \cite{Niedzialomski2015}) that the holonomy of $M$ is contained in the structure group $\mathrm{G}_2^{*}$ if and only if the $\mathrm{G}_{2}^{*}$-structure is torsion-free.
 
Since the objects of investigation are holonomy groups which are proper subgroups of $\mathrm{G}_2^{*}$, or equivalently subalgebras $\mathfrak{h}\subset \mathfrak{g}_{2}^{*}$, the holonomy representation has to be indecomposable but not irreducible (which would implie $\mathfrak{h} = \mathfrak{g}_{2}^{*}$). Thus, there is at least one $\mathfrak{h}$-invariant isotropic subspace of $V$. Using this fact, it was proven in \cite{Fino2016} that Type III algebras are in general of the form
\begin{align*}
\mathfrak{h}^{\mbox{\scriptsize \textit{III}}} = \left\lbrace h(A,y,v) \;\middle|\; A \in \mathfrak{gl}(2, \mathbb{R}), \; v \in \mathbb{R}, \; y \in \mathbb{R}^{2} \right\rbrace = \mathfrak{g}_{0} \oplus \mathfrak{g}_2 \oplus \mathfrak{g}_{3} \subset \mathfrak{p}_1 \hspace{.5em},
\end{align*}
where
\begin{align*}
h(A,v,y) =\begin{pmatrix}
\mathrm{tr}\; A & 0 & 0 & \sqrt{2}v & 0 & -y_{1} & -y_{2} \\
0 & a_1& a_2 & 0  & y_{1}& 0 & v \\
0 & a_3 & a_4 & 0  &y_{2}& -v & 0 \\
0 & 0 & 0 & 0 & \sqrt{2} v& 0 &0 \\
0 & 0 & 0 & 0 &-\mathrm{tr}\; A & 0 &0 \\
0 & 0 & 0 & 0 &0 &-a_1  & -a_3 \\
0 & 0 & 0 & 0 & 0 & -a_2 & -a_4  \\
\end{pmatrix},
\end{align*}
with elements $
A = \begin{pmatrix} a_1 & a_2 \\ a_3 & a_4\end{pmatrix} \in \mathfrak{gl}(2, \mathbb{R}), \; y = (y_1, y_2)^{\top} \in \mathbb{R}^2  , \; v \in \mathbb{R}
$. The commutator of two elements $h(A,v_A,y_A)$ and $h(B,v_B,y_B)$ is given by 
\begin{align}\label{Eq:Commutator}
[h(A,v_A,y_A),h(B,v_B,y_B)] &=\\\nonumber
 &\hspace{-2.5cm}h\left([A,B], \mathrm{tr}(A) \cdot v_B - v_A \cdot \mathrm{tr}(B), (A+\mathrm{tr}(A))\cdot y_B - (B+\mathrm{tr}(B))\cdot y_A\right)\hspace{.5em}.
\end{align}

It is appropriate to introduce some further notation at this point, which is adapted from \cite{Fino2016}. First, define the Lie algebras
\begin{align*}
\mathfrak{m}(1,2) &\defcolon  \left\lbrace h(0,v,y) \;\middle|\; v \in \mathbb{R}, y = (y_1,y_2)^{\top} \in \mathbb{R}^{2} \right\rbrace = \mathfrak{g}_2 \oplus \mathfrak{g}_3\\
\mathfrak{m}(1,1) &\defcolon  \left\lbrace h(0,v,(y_1,0)^{\top}) \;\middle|\; v, y_1 \in \mathbb{R} \right\rbrace \subset \mathfrak{g}_2 \oplus \mathfrak{g}_3 \hspace{.5em}.
\end{align*}
Then, under the identification
\begin{align*}
\mathfrak{gl}(2,\mathbb{R}) \sim \{h(A,0,0) \;|\; A \in \mathfrak{gl}(2,\mathbb{R})\}\hspace{.5em},
\end{align*}
 consider the algebra  $\mathfrak{h}^{\mbox{\scriptsize \textit{III}}}$ as a semi-direct product
\begin{align*}
 \mathfrak{h}^{\mbox{\scriptsize \textit{III}}} = \mathfrak{gl}(2,\mathbb{R}) \ltimes \mathfrak{m}(1,2)\hspace{.5em},
\end{align*}
where elements $A \in \mathfrak{gl}(2,\mathbb{R})$ act on $\mathfrak{m}$ by 
\begin{align*}
A \cdot h(0,v,y) = h(0, (\mathrm{tr}\;A) \cdot v, (A + \mathrm{tr}\; A ) \cdot y) \hspace{.5em}.
\end{align*}
Furthermore, identify $\mathrm{GL}(2,\mathbb{R})$ with a subgroup of diagonal block-matrices in $\mathrm{G}_2^{*}$ using the map
\begin{align*}
\mathrm{GL}(2,\mathbb{R}) \ni a \mapsto \mathrm{diag}\left(\det a, a,1, (\det a)^{-1}, (a^{\top})^{-1}\right) \subset \mathrm{G}_{2}^{*}
\end{align*}
and note the adjoint action on $\mathfrak{h}^{\mbox{\scriptsize \textit{III}}}$ given by
\begin{align*}
\mathrm{Ad}(a)\left(h(A,v,y)\right) = h(aAa^{-1}, \mathrm{det}(a) \cdot v, \mathrm{det}(a)\cdot ay) \hspace{.5em}.
\end{align*}
Later on, elements of $\mathfrak{h}$ are conjugated several times with elements of the form $\exp(h(0,v,y))$. The adjoint action is given by 
\begin{align}
\mathrm{Ad}\left(\exp(h(0,\bar{v},0))\right)\left(h(A,v,y)\right) &= h\left(A,v- (\mathrm{tr}\; A)\cdot \bar{v},y\right) \label{Eq:Conj_v}\\
\mathrm{Ad}\left(\exp(h(0,0,\bar{y}))\right)\left(h(A,v,y)\right) &= h\left(A,v,y- (A + \mathrm{tr}\; A )\cdot \bar{y}\right) \hspace{.5em}.\label{Eq:Conj_y}
\end{align}
The classification theorem for Type III algebras was already proven in \cite{Fino2017}, but it is restated here as it becomes more simple when dealing with Type III algebras alone. First of all, it is a well known fact that every holonomy algebra is a Berger algebra, i.e. $\mathfrak{h}\subset \mathfrak{h}^{\mbox{\scriptsize \textit{III}}}$ has to satisfy Berger's first criterion. Next, consider 
\begin{align*}
\mathcal{K}\!(\mathfrak{h}) = \left\lbrace R \in \Lambda^{\!2}V^{*} \otimes \mathfrak{h} \;\middle|\; \forall X,Y,Z \in V : R(X,Y)Z+R(Y,Z)X+R(Z,X)Y =0 \right\rbrace \hspace{.05em},
\end{align*}
the space of algebraic curvature endomorphisms, and define the space
\begin{align*}
\underline{\mathfrak{h}} \defcolon \mathrm{span} \left\lbrace R(X,Y) \;\middle|\; X,Y \in V , R \in \mathcal{K}(\mathfrak{h}) \right\rbrace \hspace{.5em}.
\end{align*}
Berger's first criterion now implies $\mathfrak{h} = \underline{\mathfrak{h}}$ for $\mathfrak{h}$ being a holonomy algebra. A parametrisation for the space $\mathcal{K}\!(\mathfrak{h})$ is given in the case of Type I algebras by \cite{Fino2016}. Since $ \mathfrak{h}^{\mbox{\scriptsize \textit{III}}} \subset  \mathfrak{h}^{\mbox{\scriptsize \textit{I}}}$, in principle the same parametrisation is used for Type III algebras. 
\begin{lemma}
In the case of Type III algebras the space $\mathcal{K}(\mathfrak{h})$ can be parametrised by $a_1,a_2$, $a_3,d_1$, $d_2 \in \mathbb{R}$, $c_k,j_k \in \mathbb{R}$ with $k= 1,\ldots,4$ and $v_1,v_2,t \in \mathbb{R}$, where $R = h(A,v,y)$ is given by the data in Table \ref{Tab:RTypIII}.
\end{lemma}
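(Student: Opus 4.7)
The plan is to leverage the fact that $\mathfrak{h}^{\mbox{\scriptsize \textit{III}}}\subset\mathfrak{h}^{\mbox{\scriptsize \textit{I}}}$ and derive the Type III parametrisation from the already-known Type I parametrisation of Fino--Kath \cite{Fino2016}. Concretely, the Type I parametrisation expresses a general $R\in\mathcal{K}(\mathfrak{h}^{\mbox{\scriptsize \textit{I}}})$ as a list of scalar parameters, together with formulae describing $R(b_i,b_j)=h(A_{ij},v_{ij},y_{ij})$ on all pairs of basis vectors. Since $\mathcal{K}(\mathfrak{h}^{\mbox{\scriptsize \textit{III}}})$ consists precisely of those $R\in\mathcal{K}(\mathfrak{h}^{\mbox{\scriptsize \textit{I}}})$ whose image lies in $\mathfrak{h}^{\mbox{\scriptsize \textit{III}}}$, the first task is to write down, for every pair $(i,j)$, the linear conditions expressing $h(A_{ij},v_{ij},y_{ij})\in\mathfrak{h}^{\mbox{\scriptsize \textit{III}}}$. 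Reading off the matrix form of $h(A,v,y)$ and comparing with \eqref{Eq:MatrixElements}, these conditions amount to $s_5=s_7=s_8=s_{13}=s_{14}=0$ and the additional identifications between entries that distinguish $\mathfrak{g}_0\oplus\mathfrak{g}_2\oplus\mathfrak{g}_3$ inside $\mathfrak{p}_1$.

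Next I would substitute these pointwise constraints back into the Type I parametrisation. Each vanishing matrix entry yields a linear equation in the Type I parameters; solving this linear system cuts down the parameter space to the sixteen free scalars claimed in the statement, namely $a_1,a_2,a_3,d_1,d_2$, $c_1,\ldots,c_4$, $j_1,\ldots,j_4$ and $v_1,v_2,t$. The naming of these parameters is chosen to match \cite{Fino2016}, so for the reader's convenience I would keep the same letters as there and simply indicate which of the Type I parameters are forced to zero. Once these substitutions are made, the values $h(A_{ij},v_{ij},y_{ij})$ can be tabulated and collected into Table \ref{Tab:RTypIII}.

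The last step is to verify that the first Bianchi identity $R(X,Y)Z+R(Y,Z)X+R(Z,X)Y=0$ is still satisfied after the reduction. This is automatic on the nose, since $R\in\mathcal{K}(\mathfrak{h}^{\mbox{\scriptsize \textit{I}}})$ already fulfils Bianchi and we only imposed further linear conditions on the values of $R$; equivalently, we check that the Bianchi identity does not introduce any new relation among the sixteen remaining parameters. In practice I would spot-check the triples $(b_i,b_j,b_k)$ for which the corresponding components of the curvature tensor mix the $\mathfrak{gl}(2,\mathbb{R})$-, $\mathfrak{g}_2$- and $\mathfrak{g}_3$-parts non-trivially, using \eqref{Eq:Commutator}, \eqref{Eq:Conj_v}, \eqref{Eq:Conj_y} as computational shortcuts.

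The main obstacle will be a purely bookkeeping one: keeping track of signs, the $\sqrt{2}$-factors in \eqref{Eq:MatrixElements} and the correspondence between the Type I coordinates of \cite{Fino2016} and the Type III coordinates $(A,v,y)$ used here. There is no conceptual difficulty beyond elementary linear algebra, but the table entries must be computed with care to ensure that the free parameters $a_i,d_i,c_k,j_k,v_i,t$ are genuinely independent and that no hidden relation among them is missed. Once the table is assembled, the lemma follows by construction.
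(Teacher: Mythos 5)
Your proposal is correct and follows essentially the same route as the paper: the paper's proof is a one-line appeal to the Type I parametrisation (Proposition 2.8 of Fino--Kath) together with the observation that $\mathfrak{h}^{\mbox{\scriptsize \textit{III}}}\cap\mathfrak{g}_1=0$, which is exactly the linear restriction you propose to impose (your "additional identifications" distinguishing $\mathfrak{g}_0\oplus\mathfrak{g}_2\oplus\mathfrak{g}_3$ inside $\mathfrak{p}_1$). Your extra step of re-checking the Bianchi identity is, as you note yourself, automatic and not needed.
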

\begin{proof}
The Lemma follows directly from Proposition 2.8 in \cite{Fino2016} and the fact that $\mathfrak{h}^{\mbox{\scriptsize \textit{III}}} \cap \mathfrak{g}_1 = \emptyset$.
\end{proof}
Some last piece of notation is necessary before stating the main theorem from \cite{Fino2016} for Type III holonomy algebras. Therefore, define the following matrix algebras
\begin{align*}
\mathfrak{co}(2) &= \left\lbrace \begin{pmatrix} a & -b \\ b & a  \end{pmatrix} \; \middle| \; a, b \in \mathbb{R} \right\rbrace \quad
\mathfrak{d} = \left\lbrace \begin{pmatrix} a & 0 \\ 0 & d  \end{pmatrix} \; \middle| \; a, d \in \mathbb{R} \right\rbrace \\
&\hspace{1cm}\mathbb{R} \cdot \mathrm{diag}(1,\mu) =  \left\lbrace c\cdot \begin{pmatrix}1& 0 \\ 0 & \mu  \end{pmatrix} \; \middle| \; c \in \mathbb{R} \right\rbrace,\; \mu \in \mathbb{R}\\
&\hspace{1cm}\mathbb{R}\cdot C_a = \left\lbrace  c \cdot \begin{pmatrix} a & -1 \\ 1 & a  \end{pmatrix} \; \middle| \;  c \in \mathbb{R} \right\rbrace, \; a \in \mathbb{R}\hspace{.5em}.
\end{align*}
\begin{theorem}[\cite{Fino2015}, Type \makebox{III} holonomy algebras]\label{Th:ClassTheorem}
If $\mathfrak{h}$ is of Type III, then there exists a basis of $V$ such that we are in one of the following cases
\begin{itemize}
\item[1.] $\mathfrak{h} = \mathfrak{a} \ltimes \mathfrak{m}(1,2)$ with $\mathfrak{a} \in \{\mathfrak{gl}(2,\mathbb{R}),\mathfrak{sl}(2,\mathbb{R}),\mathfrak{co}(2),\mathfrak{d}\}$
\item[2.] $\mathfrak{h} = \mathfrak{a} \ltimes \mathfrak{m}(1,k)$ with $\mathfrak{a} \in \{\mathbb{R}\cdot \mathrm{diag}(1,0),0\}$, where $k = 1,2$.
\end{itemize}
\end{theorem}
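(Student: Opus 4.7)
The plan is to exploit the semi-direct decomposition $\mathfrak{h}^{III} = \mathfrak{gl}(2,\mathbb{R}) \ltimes \mathfrak{m}(1,2)$ together with the adjoint action of the block-diagonal $\mathrm{GL}(2,\mathbb{R})$ inside $\mathrm{G}_2^{*}$ and the inner conjugations \eqref{Eq:Conj_v}, \eqref{Eq:Conj_y}. Writing $\pi \colon h(A,v,y) \mapsto A$ for the projection onto the $\mathfrak{gl}(2,\mathbb{R})$-factor, I set $\mathfrak{a}' := \pi(\mathfrak{h}) \subset \mathfrak{gl}(2,\mathbb{R})$ and $\mathfrak{n} := \mathfrak{h} \cap \mathfrak{m}(1,2)$. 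The commutator formula \eqref{Eq:Commutator} shows that $\mathfrak{a}'$ is a Lie subalgebra and $\mathfrak{n}$ is an $\mathfrak{a}'$-invariant ideal, the action being $A \cdot h(0,v,y) = h(0,(\mathrm{tr}\,A)v,(A+\mathrm{tr}\,A)y)$.

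First I would normalise the lifts of elements of $\mathfrak{a}'$ using \eqref{Eq:Conj_v} and \eqref{Eq:Conj_y}: whenever $\mathrm{tr}\,A \neq 0$ respectively $A+\mathrm{tr}\,A$ has a non-trivial action, an appropriate choice of $\bar v$ or $\bar y$ kills the corresponding $v$- or $y$-part of a chosen lift. Carrying this out coherently on a basis of $\mathfrak{a}'$ produces a complementary subalgebra $\mathfrak{a} \subset \mathfrak{h}$ mapped isomorphically onto $\mathfrak{a}'$ by $\pi$, so that $\mathfrak{h} = \mathfrak{a} \ltimes \mathfrak{n}$. Next I would conjugate by the block-diagonal copy of $\mathrm{GL}(2,\mathbb{R})$, which acts on $\mathfrak{a}'$ by ordinary similarity in $\mathfrak{gl}(2,\mathbb{R})$, and pick a representative from each similarity class of subalgebras of $\mathfrak{gl}(2,\mathbb{R})$: the full algebra, $\mathfrak{sl}(2,\mathbb{R})$, the Cartan-type ones ($\mathfrak{co}(2)$, $\mathfrak{d}$, $\mathbb{R}\cdot\mathrm{diag}(1,\mu)$, $\mathbb{R}\cdot C_a$), the strictly nilpotent line, and the trivial algebra.

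The central step is to impose Berger's first criterion $\mathfrak{h} = \underline{\mathfrak{h}}$ using the parametrisation of $\mathcal{K}(\mathfrak{h})$ provided by the preceding Lemma. For each candidate pair $(\mathfrak{a}', \mathfrak{n})$ I would substitute the parameters $a_i, d_j, c_k, j_k, v_i, t$ into the expressions of Table \ref{Tab:RTypIII}, span the resulting operators $R(X,Y)$ over all $X,Y \in V$, and compare with $\mathfrak{h}$. This is the main obstacle of the proof: the case analysis is finite but delicate, and I expect that the subalgebras $\mathbb{R}\cdot\mathrm{diag}(1,\mu)$ with $\mu \notin \{0,1\}$, the algebras $\mathbb{R}\cdot C_a$ with $a \neq 0$, and the strictly nilpotent one-dimensional subalgebra all fail the criterion because the $\mathfrak{m}(1,2)$-components they force into $\underline{\mathfrak{h}}$ do not fit inside the prescribed $\mathfrak{a}'$-invariant subspace $\mathfrak{n}$. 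The Type III condition that the socle has dimension $3$, equivalently $\mathfrak{h} \cap \mathfrak{g}_1 = 0$ as already used in the Lemma, further rules out shrinkings of $\mathfrak{n}$ inconsistent with the size of $\mathfrak{a}$.

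Finally, one reads off the surviving pairs. In the large cases the operators $A + \mathrm{tr}\,A$ with $A \in \mathfrak{a}'$ act with no proper invariant subspace of $\mathbb{R}\oplus\mathbb{R}^{2}$ compatible with $\underline{\mathfrak{h}} = \mathfrak{h}$; hence $\mathfrak{n} = \mathfrak{m}(1,2)$ and $\mathfrak{a}'$ is forced into the list $\{\mathfrak{gl}(2,\mathbb{R}),\mathfrak{sl}(2,\mathbb{R}),\mathfrak{co}(2),\mathfrak{d}\}$ of item~1. In the small cases, namely $\mathfrak{a}' = \mathbb{R}\cdot\mathrm{diag}(1,0)$ or $\mathfrak{a}' = 0$, the invariant subspaces $\mathfrak{m}(1,1) \subset \mathfrak{m}(1,2)$ are also compatible with the Berger criterion, giving item~2 with $k = 1,2$. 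A last application of \eqref{Eq:Conj_v} and \eqref{Eq:Conj_y} absorbs the remaining moduli in the choice of lift $\mathfrak{a}$ and produces a basis of $V$ in which $\mathfrak{h}$ takes exactly the form stated.
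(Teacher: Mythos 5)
Your overall strategy -- project to $\mathfrak{a}'=\pi(\mathfrak{h})\subset\mathfrak{gl}(2,\mathbb{R})$, normalise by conjugation, and then use Table \ref{Tab:RTypIII} to pin down which pairs $(\mathfrak{a}',\mathfrak{n})$ survive -- is essentially the paper's, but there are two genuine gaps. First, the decisive case analysis is only announced, not performed, and your stated expectations are wrong in two sub-cases: you predict that $\mathbb{R}\cdot\mathrm{diag}(1,\mu)$ survives for $\mu\in\{0,1\}$ and that $\mathbb{R}\cdot C_a$ fails only for $a\neq 0$, whereas the table forces $\mu=0$ (for $\mathfrak{a}\subset\mathfrak{d}$ the only surviving $A$-parameter is $d_1$, and $c_2=\mu d_1$ together with $c_2=0$ kills $\mu=1$) and kills $\mathbb{R}\cdot C_a$ for \emph{every} $a$, including $a=0$ (the linear system $d_1=c_2=-c_3$, $-d_2=c_1=c_4$, $d_1=\alpha c_1$, $c_1=\alpha c_3$ gives $(1+\alpha^2)c_1=0$, hence only the trivial solution). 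Since your final list happens to match the theorem while your intermediate claims do not, the computation that is the actual content of the proof is missing.

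Second, and more structurally, you never invoke indecomposability, and Berger's criterion alone does not suffice. The key step in the paper is to show $\mathfrak{v}=\{v\mid h(0,v,y)\in\mathfrak{h}\}\neq 0$: if $\mathfrak{v}=0$ one uses the cocycle/coboundary argument (Whitehead's lemma for $\mathfrak{sl}(2,\mathbb{R})$, the identity element otherwise) to conjugate away all $v$-components, whence $b_4\in\ker\mathfrak{h}$ -- a contradiction because $\ker\mathfrak{h}$ must be isotropic by indecomposability. Only then do $v_1\neq0$ or $v_2\neq0$ in Table \ref{Tab:RTypIII} force $\mathfrak{y}\neq0$, and irreducibility of the $\mathfrak{a}$-action on $\mathbb{R}^2$ gives $\mathfrak{n}=\mathfrak{m}(1,2)$. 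Your substitute argument (``$A+\mathrm{tr}\,A$ acts with no proper invariant subspace of $\mathbb{R}\oplus\mathbb{R}^2$'') fails already for $\mathfrak{a}=\mathfrak{sl}(2,\mathbb{R})$, where $\mathrm{tr}\,A=0$ makes $\{0\}\oplus\mathbb{R}^2$ invariant; an algebra such as $\mathfrak{sl}(2,\mathbb{R})\ltimes\{h(0,0,y)\}$ is not excluded by invariance plus the Berger span (set $v_1=v_2=0$ in the table) and can only be ruled out because $b_4$ would lie in its kernel. You need to add the isotropy-of-the-kernel argument explicitly to close the proof.
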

\begin{proof}
Let $b_i$ be the basis chosen as in (\ref{Eq:MatrixElements}), with dual basis $b^i$ as in (\ref{Eq:Metric}). By \cite{Fino2016} the three-\-di\-men\-sio\-nal socle $S$ of $\mathfrak{h}$ is spanned by $b_1,b_2,b_3$, where the line spanned by $\mathbb{R}\cdot b_1$ is isotropic. The socle splits under the semi-simple action of $\mathfrak{h}$ into $S = \mathbb{R}\cdot b_1 \oplus \hat{S}$, where $\hat{S} = \mathrm{span}\{b_2,b_3\}$ (cf. \cite{Fino2016}). Since $\mathfrak{h}$ acts semi-simply on $\hat{S}$, $\mathfrak{a}$ acts semi-simply on $\mathbb{R}^{2}$. Thus, $\mathfrak{a} \in \{0,\mathfrak{gl}(2,\mathbb{R}),\mathfrak{sl}(2,\mathbb{R}\})$ or it is conjugated to one of the algebras $\mathfrak{co}(2),\mathfrak{d},\mathbb{R}\cdot \mathrm{diag}(1,\mu)$ or $\mathbb{R}\cdot C_a$. 

First, exclude $\mathfrak{a}=\mathbb{R}\cdot C_a$. By Table \ref{Tab:RTypIII} this implies the system of linear equations
\begin{align*}
d_1 = c_2 = -c_3 \qquad -d_2 = c_1 = c_4 \qquad d_1 = \alpha \cdot c_1 \qquad c_1 = \alpha \cdot c_3
\end{align*}
for an arbitrary parameter $\alpha \in \mathbb{R}$. The system has only the trivial solution, which is a contradiction to  Berger's first criterion $h(C_a,v_0,y_0) \in \underline{\mathfrak{h}}$, for some $v_0 \in \mathbb{R}$ and $y_0 \in \mathbb{R}^{2}$.

Next, note that in the remaining cases $\mathfrak{v} = \left\lbrace v \in \mathbb{R} \;\middle|\; h(0,v,y) \in \mathfrak{h}, y\in \mathbb{R}^2 \right\rbrace \neq 0$. In order to see this assume $\mathfrak{v} = 0$ and consider first the case $\mathfrak{a} = \mathfrak{sl}(2,\mathbb{R})$. By indecomposability of the holonomy representation the kernel of $\mathfrak{h}$ is necessarily isotropic. Then, for each $A \in \mathfrak{a}$ there exists a unique element $v_a$ such that $h(A,v_a,y)\in \mathfrak{h}$ for some $y \in \mathbb{R}^{2}$. Otherwise the non-isotropic vector $b_4$ would be in $\ker \mathfrak{h}$. The map $\omega: A \mapsto v_A$ is a cocycle with respect to the representation $\rho: \mathfrak{a} \to \mathfrak{gl}(\mathbb{R}) \cong \mathbb{R}$, $\rho(A)v = \mathrm{tr}(A)\cdot v$, which is just the trivial representation in the case of $\mathfrak{sl}(2,\mathbb{R})$. This can be seen easily seen by calculating $v_{\left[A,B\right]}  = \mathrm{tr}(A) \cdot v_B - v_A \cdot \mathrm{tr}(B)$ via Eq. (\ref{Eq:Commutator}).

By semi-simplicity of $\mathfrak{sl}(2,\mathbb{R})$ and Whitehead's first lemma $\omega$ is a coboundary such that $v_A = \mathrm{tr}(A)\cdot\hat{v}$ for some $\hat{v} \in \mathbb{R}$. Now, conjugating with the element $\exp\!\left(h(0,\hat{v},0)\right)$ according to \eqref{Eq:Conj_v}, the projection to $\left\lbrace h(0,v,0) \;\middle|\; v \in \mathbb{R} \right\rbrace$ is trivial. This implies $b_4 \in \ker\mathfrak{h}$ which is a contradiction to $\ker \mathfrak{h}$ being necessarily isotropic. Hence, $\mathfrak{v} \neq 0$.

For the other algebras one can argue similarly. Again, assume $\mathfrak{v} = 0$ and by indecomposability $\ker\mathfrak{h}$ is isotropic which implies the existence of a unique $v_A$ for each $A\in \mathfrak{a}$ such that $h(A,v_a,y)\in \mathfrak{h}$ for some $y \in \mathbb{R}^{2}$. The map $\omega: A \mapsto v_A$ is still a cocycle. Now, there are two subcases: Firstly, for the unit $I\in \mathfrak{a}$, by above considerations, $0 = v_{[I,A]} = I(v_A) - A(v_I)$ implies $v_A = 2\,A \cdot v_I$. After conjugation with $\exp(h(0,2\cdot v_I,0)$ the projection to $\left\lbrace h(0,v,0) \;\middle|\; v \in \mathbb{R} \right\rbrace$ is again trivial and the contradiction follows as before. Secondly, for $\mathfrak{a} = \mathbb{R}\cdot \mathrm{diag}(1,\mu)$ choose $I' = \mathrm{diag}(1,\mu)$. Then, the same argument holds using $I'$ instead of the unit $I$. 
For $\mathfrak{a} = 0$ the proposition follows directly from indecomposability. 

Next, by $\mathfrak{v}\neq 0$ and Table \ref{Tab:RTypIII} obtain $v_1 \neq 0$ or $v_2 \neq 0$, hence one find that $\mathfrak{y} = \left\lbrace y \in \mathbb{R}^2 \;\middle|\; h(0,0,y) \in \mathfrak{h}\right\rbrace \neq 0$.

 Now, if $\mathfrak{a} \in \{0,\mathfrak{gl}(2,\mathbb{R}),\mathfrak{sl}(2,\mathbb{R}),\mathfrak{co}(2)\}$ one has $\mathfrak{y}=\mathbb{R}^{2}$, as there is no one-dimensional, invariant subspace of $\mathbb{R}^{2}$. Thus, $\mathfrak{a}\subset \mathfrak{h}$ and $\mathfrak{h} \cap \mathfrak{m} = \mathfrak{m}(1,2)$, which implies $\mathfrak{h} = \mathfrak{a} \ltimes \mathfrak{m}(1,2)$.
 
 Assuming $\mathfrak{a} \subset \mathfrak{d}$, it follows that the parameters in column A of  Table \ref{Tab:RTypIII} are identically zero, except $d_1,c_4$. From Table \ref{Tab:RTypIII} immediately follows in the case $\mathfrak{a} = \mathfrak{d}$ that $\mathfrak{y} = \mathbb{R}^{2}$ and subsequently $\mathfrak{h} = \mathfrak{d} \ltimes \mathfrak{m}(1,2)$. If $\mathfrak{a} = \mathbb{R} \cdot \mathrm{diag}(1,\mu) \subset \mathfrak{d}$, Table \ref{Tab:RTypIII} implies $\mu = 0$ and $d_1 \neq 0$ such that, possibly after conjugation, $\mathfrak{h} \cap \mathfrak{m} = \mathfrak{m}(1,k)$, $k = 1,2$ and consequently $\mathfrak{h} = \mathbb{R} \cdot \mathrm{diag}(1,0) \ltimes \mathfrak{m}(1,k)$.
 
 Finally, in the case $\mathfrak{a} = 0$ follows  $\mathfrak{y} = \mathbb{R}^{2}$ or $\mathfrak{y} = (1,0)^{\top}$ by Table \ref{Tab:RTypIII} after suitable conjugation. 
\end{proof}
\begin{table}[!h]
\begin{center}
\caption{Parametrisation of curvature endomorphisms which are able to span the algebras of Type III. Based on the table for Type I algebras in \cite{Fino2016}.}\label{Tab:RTypIII}
\begin{tabular}{|c|c|c|c|}
\hline &&& \\[-2ex]
$R_{ij}=R\!\left(b_i,b_j \right)$ & $A$ & $v$ & $y$ \\[.75ex]
\hline &&&\\[-1.5ex]
$R_{15}$ & $0$ & $0$  & $(d_1+c_2,c_1+c_4)$ \\
$R_{26}$ &$\begin{pmatrix} -a_1 & -a_2 \\ -a_3 & a_1\\ \end{pmatrix}$ & $0$  & $(d_1,c_1)$ \\[3ex]
$R_{27}$ & $\begin{pmatrix} -a_3 & a_1 \\ j_1 & a_3\\ \end{pmatrix}$ & $0$  & $(c_2,c_3)$ \\[3ex]
$R_{36} $ & $\begin{pmatrix} -a_2 &j_2 \\ a_1 & a_2\\ \end{pmatrix}$ & $0$  & $(d_2,c_2)$ \\[3ex]
$-R_{67} = \frac{1}{\sqrt{2}}R_{45}$ &$0$& $0$  & $(v_1,v_2)$ \\[2ex]
$R_{56}$  &$\begin{pmatrix}d_1 & d_2 \\ c_1 & c_2\\ \end{pmatrix}$ & $v_1$  & $(j_3,t)$ \\[3ex]
$R_{57}$  &$\begin{pmatrix} c_1 & c_2 \\c_3 & c_4\\ \end{pmatrix}$ & $v_2$  & $(t,j_4)$ \\[3ex]
\hline
\multicolumn{4}{|c|}{\rule{0pt}{2.0ex}$R_{12}=R_{13}=R_{14}=R_{16}=R_{17}=R_{23}=R_{24}=R_{34}=0$}\\[1ex]
\multicolumn{4}{|c|}{$R_{37}=R_{15}-R_{26}$}\\
\hline
\end{tabular}
\end{center}
\end{table}
%
%
\section{Local Type III metrics}\label{sec:2}
In this section a procedure is developed in order to construct (quasi-)normal forms for the metric induced by the $\mathrm{G}_{2}^{*}$-invariant 3-form $\omega$ (cf. (\ref{Eq:Metric})). Then, these (quasi-)normal forms are used to find metrics which realize each of the algebras in Theorem \ref{Th:ClassTheorem}. This will prove Theorem \ref{Thm:Main}. The appraoch is fairly the same used by Fino and Kath for Type I algebras \cite{Fino2017}. 

Assume $M$ has holonomy $H \subset \mathrm{G}_{2}^{*}$. Then, $M$ admits a torsion-free $H$-structure. Holonomy is here considered with respect to the metric $g$ and the Levi-Civita connection $\nabla$ induced by the $\mathrm{G}_{2}^{*}$-invariant (hence $H$-invariant) 3-form $\omega$ as described in Section \ref{sec:1}. Note, every torsion-free $H$-structure is 1-flat \cite{Bryant1987} and Cartan's first structure equation is given by
\begin{align*}
\mathrm{d}\eta = -\theta \wedge \eta \hspace*{.5em},
\end{align*}
where $\eta: TP \to \mathbb{R}^{n}$ and $\theta: TP \to \mathfrak{h}$ is the connection form associated with the Levi-Civita connection. Here, the principal bundle $P$ is the reduction of the frame bundle along $H \hookrightarrow \mathrm{G}_{2}^{*}\hookrightarrow \mathrm{GL}(7,\mathbb{R})$.  Choosing a local frame $(b_i)$ in $P$ the components of the connection form are given by $\theta_{j}^{i} = b^{i}(\nabla b_j)$, where $(b^j)$ denotes the dual frame. For any tangent vector $X$ one has $b^{i}(\nabla b_j)(X)$ $=b^{i}(\nabla_{X} b_j)$.
The construction `machinery' now consists of three steps. First, choose a local frame $(b_i)$ and write down the structure equations as an exterior differential system, where the connection form takes values in the holonomy algebra $\mathfrak{h}$. Choosing local coordinates $x_i$ on $M$, the frame is expressible in terms of local functions using exterior differential calculus methods, i.e. Frobenius theorem and suitable transformations of the local frame by elements of $H$. As the second step, use this adapted frame and calculate the components of the connection form, which reformulates as a PDE system for the local functions on $M$. With respect to this system lastly use Ambrose-Singer holonomy theorem \cite{Ambrose1953} and  calculate the components of the curvature tensor (and their covariant derivatives, if necessary; cf. \cite{Ozeki1956}) such that they span the holonomy algebra $\mathfrak{h}$.

In the relevant case of manifolds $(M,g)$ with holonomy contained in $\mathrm{G}_2^{*}$ the connection form takes values in the algebras of Theorem \ref{Th:ClassTheorem}. Applying the approach described above, main Theorem \ref{Thm:Main} is proven in the following subsections by calculating a metric for each of the 8 cases. 

\begin{remark}
Below bold letters denote differential 1-forms. Since the procedure is quite the same in each case, detailed calculations are provided in the first and most general one while omitting some details of these somewhat laborious calculations later on. Furthermore, note that all metrics finally turn out to be real analytic. This guarantees that the infinitesimal holonomy calculated in the following subsections coincide with the holonomy. 
\end{remark}
\subsection{Holonomy of Type III 1a}
\begin{prop}\label{Th:Case1a}
The holonomy of $(M^{4,3},g)$ is contained in $\mathrm{G}_2^{*}$ and is of Type III 1a if and only if there are local coordinates $x_1,\ldots,x_7$ such that $g = 2(b^1\cdot b^5 + b^2 \cdot b^6 + b^3\cdot b^7) - (b^4)^2$, where
\begin{align*}
b^{1} &= \mathrm{d}{x_1} + r_5(x_4,x_5,x_6,x_7) \cdot  \mathrm{d}{x_5} + r_6(x_1,x_5,x_6,x_7) \cdot  \mathrm{d}{x_6} + r_7(x_1,x_5,x_6,x_7) \cdot  \mathrm{d}{x_7}\\
b^{2} &= \mathrm{d}{x_2} + s_6(x_2,x_3,x_5,x_6,x_7)  \cdot  \mathrm{d}{x_6}  + s_7(x_2,x_3,x_5,x_6,x_7)  \cdot  \mathrm{d}{x_7}\\
b^{3} &= \mathrm{d}{x_3} +t_6(x_2,x_3,x_5,x_6,x_7)  \cdot  \mathrm{d}{x_6}+ t_7(x_2,x_3,x_5,x_6,x_7)   \cdot  \mathrm{d}{x_7}\\
b^{5} & = \mathrm{e}^{u(x_5,x_6,x_7)} \cdot \mathrm{d}{x_5}\\
b^{i} &= \mathrm{d}{x_i} \qquad \qquad i = 4,6,7,
\end{align*}
with functions $r_5,r_6,r_7,s_7$ given by
\begin{align*}
\begin{split}
r_5 &= \frac{x_4}{\sqrt{2}}\left[(\mathrm{e}^{u} \cdot F_6(x_5,x_6,x_7))_{x_7} - (\mathrm{e}^{u} \cdot F_7(x_5,x_6,x_7))_{x_6} - (F_4(x_5,x_6,x_7))_{x_5}\right] \\
&\hspace{9cm}+ F_5(x_5,x_6,x_7)
\end{split}\\
r_6 &= x_1 \cdot (u)_{x_6} + F_6(x_5,x_6,x_7)\\
r_7 &= x_1 \cdot (u)_{x_7} + F_7(x_5,x_6,x_7)\\
s_7 &= t_6 + F_4(x_5,x_6,x_7)
\end{align*}
for arbitrary functions $F_5,F_6,F_7$ and functions $u,s_6,t_6,t_7,F_4$ satisfying the differential equations
\begin{align*}
(u)_{x_6} &= (s_6)_{x_2}+ (t_6)_{x_3}\\
(u)_{x_7} &=(t_7)_{x_3}  + (t_6)_{x_2}\\
  (s_7)_{x_6}+t_7 \cdot (s_6)_{x_3} + s_7 \cdot (s_6)_{x_2}&=  (s_6)_{x_7} + t_6 \cdot (s_7)_{x_3} + s_6 \cdot (s_7)_{x_2}   \\
 (t_7)_{x_6}+ t_7 \cdot (t_6)_{x_3} + s_7 \cdot (t_6)_{x_2}  &=   (t_6)_{x_7} + t_6 \cdot (t_7)_{x_3} + s_6 \cdot (t_7)_{x_2}\hspace{.5em}.
\end{align*}
\end{prop}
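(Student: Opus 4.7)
The plan is to implement the three-step construction machinery of Section~\ref{sec:2} for the largest Type III algebra $\mathfrak{h}^{\text{III 1a}} = \mathfrak{gl}(2,\mathbb{R}) \ltimes \mathfrak{m}(1,2)$; the remaining cases in Theorem~\ref{Th:ClassTheorem} will appear as specializations of this one. I would begin by writing the connection form as $\theta = h(\mathbf{A},\mathbf{v},\mathbf{y})$, where $\mathbf{A}$, $\mathbf{v}$, $\mathbf{y}$ are $1$-forms on the reduced frame bundle with values in $\mathfrak{gl}(2,\mathbb{R})$, $\mathbb{R}$, $\mathbb{R}^{2}$ respectively, and substituting into $\mathrm{d}\eta = -\theta\wedge\eta$. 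The explicit matrix form of $h$ immediately shows that the fourth row of $\theta$ vanishes, hence $\mathrm{d}b^{4} = 0$, and the invariance under $\mathfrak{h}^{\text{III 1a}}$ of the filtration $\mathbb{R}\cdot b_{1} \subset S \subset S^{\perp} \subset V$ translates into integrability of the corresponding annihilator subsystems, producing nested foliations on $M$ to which Frobenius's theorem can be applied.

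The second step is to fix local first-integral coordinates $x_{1},\ldots,x_{7}$ and exploit the remaining $H$-gauge freedom to normalize the frame. Scaling by the $\mathrm{GL}(2,\mathbb{R})$-part of $H$ accounts for the conformal factor $\mathrm{e}^{u}$ on $b^{5}$; conjugation by elements of the form $\exp(h(0,\bar{v},\bar{y}))$, via \eqref{Eq:Conj_v}--\eqref{Eq:Conj_y}, absorbs the $x_{1}$- and $x_{4}$-dependence that would otherwise appear, so that $b^{4}, b^{6}, b^{7}$ become exact and $b^{1}, b^{2}, b^{3}$ take the displayed affine form in the coordinates. The precise variable dependencies announced for $r_{5}(x_{4},x_{5},x_{6},x_{7})$, $s_{6}(x_{2},x_{3},x_{5},x_{6},x_{7})$, and so on are then forced by this normalization together with the structural requirement that the prescribed zero entries of $h(\mathbf{A},\mathbf{v},\mathbf{y})$ remain zero after the substitution.

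Once the frame is in this shape, computing each $\mathrm{d}b^{i}$ and matching it against $-\theta^{i}_{j}\wedge b^{j}$ produces explicit expressions for the entries of $\mathbf{A},\mathbf{v},\mathbf{y}$ in terms of the coordinate functions. The zero-entry constraints coming from the parabolic block pattern of $h(\mathbf{A},\mathbf{v},\mathbf{y})$, namely the vanishing of the entries in columns $1$--$4$ of rows $5$--$7$, then yield exactly the stated formulas for $r_{5}, r_{6}, r_{7}, s_{7}$ in terms of $u$ and the free functions $F_{4}, F_{5}, F_{6}, F_{7}$, together with the four nonlinear PDEs on $u, s_{6}, s_{7}, t_{6}, t_{7}$, which encode the integrability of the $\mathfrak{gl}(2,\mathbb{R})$-valued block of $\theta$. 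For the converse direction I would invoke the Ambrose--Singer theorem and the parametrization of $\mathcal{K}(\mathfrak{h})$ in Table~\ref{Tab:RTypIII} to check that, for generic choices of $F_{5}, F_{6}, F_{7}$ and of solutions of the four PDEs, the values $R(b_{i},b_{j})$ together with their covariant derivatives span all of $\mathfrak{h}^{\text{III 1a}}$ and not a proper subalgebra, which is what is required to conclude that the holonomy is of Type III 1a rather than merely contained in it.

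The main obstacle will be the bookkeeping in the middle step: organizing the sequence of gauge transformations so that the dependency of each frame coefficient collapses to the precise variable list claimed, and disentangling the resulting overdetermined exterior system into the clean normal form together with its four compatibility PDEs. Once those nonlinear compatibility conditions are isolated, the remaining work essentially reduces to Frobenius and linear algebra inside a concretely given $7$-dimensional matrix algebra, and the same blueprint then specializes, case by case, to each of the other seven Type III algebras of Theorem~\ref{Th:ClassTheorem}.
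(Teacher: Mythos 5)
Your overall strategy --- Frobenius plus $H$-gauge normalization of the coframe, then imposing that the Levi-Civita connection of $g$ be $\mathfrak{h}$-valued, then Ambrose--Singer against Table~\ref{Tab:RTypIII} for exactness --- is the paper's, but two of your concrete claims about where the equations come from are wrong, and the second one would derail the computation. First, the fourth row of $h(A,v,y)$ is \emph{not} zero: it contains $\sqrt{2}\mathbf{v}$ in the fifth column, so the structure equation gives $\mathrm{d}b^4=-\sqrt{2}\,\mathbf{v}\wedge b^5\in\mathcal{I}(\mathrm{d}x_5)$ rather than $\mathrm{d}b^4=0$. One must first write $b^4=\mathrm{d}x_4+q\cdot\mathrm{d}x_5$, gauge $q$ away with $\exp(h(0,v,0))$, and only then does $\mathrm{d}b^4=0$ force $\mathbf{v}\in\mathcal{I}(\mathrm{d}x_5)$, which is needed downstream to control $b^1,b^2,b^3$. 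This is recoverable, but it is not the reasoning you wrote.

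The more serious problem is your identification of the constraints that generate the normal form and the four PDEs as ``the vanishing of the entries in columns $1$--$4$ of rows $5$--$7$'' of $\theta$. By metric compatibility of the Levi-Civita connection in the coframe $g=2(b^1b^5+b^2b^6+b^3b^7)-(b^4)^2$, the vanishing of that lower-left block is equivalent to $s_5=s_7=s_8=s_{13}=s_{14}=0$ in the notation of \eqref{Eq:MatrixElements}, i.e.\ to the connection being $\mathfrak{p}_1$-valued; these are exactly the relations the paper dismisses as ``fulfilled trivially'' once the coframe has been adapted, and imposing only them yields no PDEs and does not cut the connection down to $\mathfrak{gl}(2,\mathbb{R})\ltimes\mathfrak{m}(1,2)$. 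The content-bearing conditions are instead relations among the \emph{nonzero} entries of $h(A,v,y)$, which are genuine $\mathfrak{g}_2^{*}$-conditions inside $\mathfrak{so}(4,3)$: the trace relation $b^1(\nabla b_1)=b^2(\nabla b_2)+b^3(\nabla b_3)$, the relation $b^1(\nabla b_4)=\sqrt{2}\,b^2(\nabla b_7)$ identifying the two occurrences of $\mathbf{v}$, and the vanishings $b^i(\nabla b_1)=0$ for $i=2,3$. Combining the first with the last two (and with $(s_7)_{x_2}=(t_6)_{x_2}$, $(s_7)_{x_3}=(t_6)_{x_3}$ from the $\mathbf{v}$-relation) gives $(u)_{x_6}=(s_6)_{x_2}+(t_6)_{x_3}$ and $(u)_{x_7}=(t_7)_{x_3}+(t_6)_{x_2}$ together with $r_6=x_1(u)_{x_6}+F_6$, $r_7=x_1(u)_{x_7}+F_7$; the $\mathbf{v}$-relation alone produces $s_7=t_6+F_4$, the formula for $r_5$, and the two remaining nonlinear compatibility PDEs. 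Your attribution of those PDEs to ``integrability of the $\mathfrak{gl}(2,\mathbb{R})$-valued block'' is likewise off target. Until the correct constraint set is identified, the middle step of your plan cannot produce the stated normal form.
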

\begin{proof}
The structure equation
\begin{align*}
\begin{pmatrix}
\mathrm{d}{b^{1}} \\ \mathrm{d}{b^{2}} \\ \mathrm{d}{b^{3}} \\ \mathrm{d}{b^{4}} \\ \mathrm{d}{b^{5}} \\ \mathrm{d}{b^{6}} \\ \mathrm{d}{b^{7}}
\end{pmatrix} =
-\begin{pmatrix}
\mathbf{a}_1+\mathbf{a}_4 & 0 & 0 & \sqrt{2} \mathbf{v} & 0 & -\mathbf{y}_{1} & -\mathbf{y}_{2} \\
0 &  \mathbf{a}_1 &  \mathbf{a}_2 & 0  & \mathbf{y}_{1}& 0 & \mathbf{v} \\
0 &  \mathbf{a}_3 &  \mathbf{a}_4 & 0  & \mathbf{y}_{2}& - \mathbf{v} & 0 \\
0 & 0 & 0 & 0 & \sqrt{2} \mathbf{v}& 0 &0 \\
0 & 0 & 0 & 0 &- \mathbf{a}_1-\mathbf{a}_4 & 0 &0 \\
0 & 0 & 0 & 0 & 0 &- \mathbf{a}_1  &  -\mathbf{a}_3 \\
0 & 0 & 0 & 0 & 0 &  -\mathbf{a}_2 & -\mathbf{a}_4  \\
\end{pmatrix} \wedge 
\begin{pmatrix}
b^{1} \\ b^{2} \\ b^{3} \\ b^{4} \\ b^{5} \\ b^{6} \\ b^{7}
\end{pmatrix}
\end{align*}
implies that coordinates exist such that $b^6,b^7$ lie in the ideal $\mathcal{I}_0=\mathcal{I}(\mathrm{d}x_6,\mathrm{d}x_7)$ generated by $\mathrm{d}x_6,\mathrm{d}x_7$. Transforming the basis pointwise with an appropriate  element 
\begin{align*}
\mathrm{GL}(2, \mathbb{R})  \ni M = \begin{pmatrix} m_1 & m_2 \\ m_3 & m_4 \end{pmatrix}  \cong  \mathrm{diag}(\det M, M,1, (\det M)^{-1},(M^{\top})^{-1}) \in \mathrm{G}_2^*
\end{align*}
yields $b^6 = \mathrm{d}x_6$ and $b^7 = \mathrm{d}x_7$. This implies $\mathbf{a}_1,\ldots,\mathbf{a}_4 \in \mathcal{I}_0$, since $ \mathrm{d}b^6 = \mathrm{d}\mathrm{d}x_6 = 0$ and $\mathrm{d}b^7 = \mathrm{d}\mathrm{d}x_7 =0$. Furthermore, by $\mathrm{d}b^5 \in \mathcal{I}(b^5)$ the structure equation implies $b^5 = \mathrm{e}^{u} \cdot \mathrm{d}x_5 $ for a suitable local function $u$ and coordinate $x_5$. Since $\mathbf{a}_1,\mathbf{a_4}\in \mathcal{I}_0$, one has $u = u(x_5,x_6,x_7)$. Using the exponential of $u$ here and in comparable cases has practical reasons, as it  slightly simplifies some calculations.

From the structure equation deduce $\mathrm{d}b^4 \in \mathcal{I}(\mathrm{d}x_5)$, the ideal generated by $\mathrm{d}x_5$. Thus, $b^4 = \mathrm{d}x_4 + q\cdot \mathrm{d}x_5$ and after a change of basis by an element $\exp(h(0,v,0))$ follows $b^4 = \mathrm{d}x_4$. Then, $\mathrm{d}b^4 = 0$ implies $\mathbf{v} \in \mathcal{I}(\mathrm{d}x_5)$.

Since $\mathbf{a}_1,\ldots,\mathbf{a}_4  \in \mathcal{I}_0$ and $\mathbf{v} \in \mathcal{I}(\mathrm{d}x_5)$, it follows from the structure equation that $\mathrm{d}b^2$, $\mathrm{d}b^3\in \mathcal{I}_1 = \mathcal{I}(\mathrm{d}x_5,\mathrm{d}x_6,\mathrm{d}x_7)$, the ideal generated by $\mathrm{d}x_5,\mathrm{d}x_6,\mathrm{d}x_7$. This allows the assumption
\begin{align*}
b^2 &= \mathrm{d}x_2 + s_5 \cdot \mathrm{d}x_5 + s_6 \cdot \mathrm{d}x_6 + s_7 \cdot \mathrm{d}x_7 \\ b^3 &= \mathrm{d}x_3 + t_5 \cdot \mathrm{d}x_5 + t_6 \cdot \mathrm{d}x_6 + t_7 \cdot \mathrm{d}x_7 \hspace{.5em}.
\end{align*}
Transforming the basis with an appropriate element $\exp(h(0,0,(y_1,y_2)^{\top}))$ yields
\begin{align*}
b^2 = \mathrm{d}x_2  + s_6 \cdot \mathrm{d}x_6 + s_7 \cdot \mathrm{d}x_7 \qquad b^3 = \mathrm{d}x_3 + t_6 \cdot \mathrm{d}x_6 + t_7 \cdot \mathrm{d}x_7 \hspace{.5em}.
\end{align*}
Then,
\begin{align*}
\mathrm{d}b^2 &= \mathrm{d}s_6 \wedge \mathrm{d}x_6 + \mathrm{d}s_7 \wedge \mathrm{d}x_7 = - \mathbf{a}_1 \wedge b^2 - \mathbf{a}_2 \wedge b^3 - \mathbf{y}_1 \wedge b^5 - \mathbf{v} \wedge b^7 \\
\mathrm{d}b^3 &= \mathrm{d}t_6 \wedge \mathrm{d}x_6 + \mathrm{d}t_7 \wedge \mathrm{d}x_7 = - \mathbf{a}_3 \wedge b^2 - \mathbf{a}_4 \wedge b^3 - \mathbf{y}_2 \wedge b^5 - \mathbf{v} \wedge b^6 
\end{align*}
implies 
\begin{align*}
s_6 &= s_6(x_2,x_3,x_5,x_6,x_7), \qquad s_7 = s_7(x_2,x_3,x_5,x_6,x_7),\\
t_6 &= s_6(x_2,x_3,x_5,x_6,x_7), \qquad t_7 = s_7(x_2,x_3,x_5,x_6,x_7)
\end{align*}
and $\mathbf{y}_1,\mathbf{y}_2 \in \mathcal{I}_1$. Using the latter, $\mathbf{a}_1,\ldots,\mathbf{a}_4  \in \mathcal{I}_0$, $\mathbf{v} \in \mathcal{I}(\mathrm{d}x_5)$ and the equation for $\mathrm{d}b^1$,  by an analogue consideration obtain
\begin{align*}
b^1 = \mathrm{d}x_1 + r_5 \cdot \mathrm{d}x_5  + r_6 \cdot \mathrm{d}x_6 + r_7 \cdot \mathrm{d}x_7
\end{align*}
with functions 
\begin{align*}
r_5 = r_5(x_4,x_5,x_6,x_7), \quad r_6 = r_6(x_1,x_5,x_6,x_7),\quad r_7 = r_7(x_1,x_5,x_6,x_7) \hspace{.5em}.
\end{align*}
Now, with respect to the local coordinates and the metric defined above, the holonomy is contained in $\mathrm{G}_2^{*}$ and of Type III 1a if and only if the following relations between the components of the local connection form are satisfied:
\begin{align*}
b^1(\nabla b_1) = b^2 (\nabla b_2) + b^3 (\nabla b_3), \quad b^1 (\nabla b^4 ) = \sqrt{2} b^2(\nabla b_7), \quad b^i (\nabla b_1) =0, \quad i=2,3 \hspace{.5em}.
\end{align*}
The remaining relations are either fulfilled trivially or identically zero, i.e. they do not contain any further information. The relations mentioned above lead to a system of partial differential equations given by \\[\baselineskip]
\noindent $b^1 (\nabla b_1) = b^2 (\nabla b_2) +b^3 (\nabla b_3) : $ \hfill
\begin{align}
(u)_{x_6} + (r_6)_{x_1} &= 2 \cdot (s_6)_{x_2} + (s_7)_{x_3} + (t_6)_{x_3} \label{Eq:GL1a1}\\
 (u)_{x_7} + (r_7)_{x_1} &= 2 \cdot (t_7)_{x_3} + (s_7)_{x_2} + (t_6)_{x_2} \label{Eq:GL1a2}
\end{align}
$b^1 (\nabla b_4)  = \sqrt{2}\cdot b^2 (\nabla b_7) :$  \hfill 
\begin{align}
 \sqrt{2} (r_5)_{x_4} &= u\cdot \left[ -(r_7)_{x_6} + (r_6)_{x_7} +r_6 \cdot ( r_7)_{x_1}-(r_6)_{x_1} \cdot r_7\right]-(s_7)_{x_5} + (t_6)_{x_5} \nonumber\\
 (s_7)_{x_2}&= (t_6)_{x_2} \nonumber\\
(s_7)_{x_3}& = (t_6)_{x_3} \nonumber \\
 0 &= -(s_7)_{x_6} + (s_6)_{x_7} + t_6 \cdot (s_7)_{x_3} + s_6 \cdot (s_7)_{x_2}- t_7 \cdot (s_6)_{x_3} - s_7 \cdot (s_6)_{x_2} \label{Eq:GL1a3}\\
 0 &=  -(t_7)_{x_6} + (t_6)_{x_7} + t_6 \cdot (t_7)_{x_3} + s_6 \cdot (t_7)_{x_2}- t_7 \cdot (t_6)_{x_3} - s_7 \cdot (t_6)_{x_2} \label{Eq:GL1a4}
\end{align}
$ b^2 (\nabla b_1) = 0 :$\hfill
\begin{align*}
 (u)_{x_6} &= (r_6)_{x_1} \phantom{\hspace{.5em}.}
\end{align*}
$ b^3 (\nabla b_1) = 0 :$\hfill
\begin{align*}
  (u)_{x_7} &= (r_7)_{x_1} \hspace{.5em}.
 \end{align*}
 After some substitutions and integrations this system give rise to the dependencies of $r_5,r_6$, $r_7,s_7$ on local coordinates as proposed in the theorem. The functions $F_4,F_5,F_6,F_7$ are appropriate `integration constants' and the Eqs. \eqref{Eq:GL1a1} - \eqref{Eq:GL1a4} are equivalent to the system of conditional equations for $u,s_6,t_6,t_7,F_4$ as stated.
\end{proof}
\begin{example}
One can easily check by direct calculation that the functions 
\begin{align*}
u(x_5,x_6,x_7) &= \frac{4}{3}x_6 + \frac{4x_7}{1+\mathrm{e}^{x_5}} \\
r_5(x_4,x_5,x_6,x_7) &= \frac{1}{\sqrt{2}}\frac{x_4 x_6 \cdot (4x_7 + 1 + \mathrm{e}^{x_5})}{1+\mathrm{e}^{x_5}} \cdot \exp\left( \frac{4}{3}\frac{3x_7 + x_6\left(1+ \mathrm{e}^{x_5}\right)}{1+\mathrm{e}^{x_5}}\right)\\
r_6(x_1,x_5,x_6,x_7) &= \frac{4}{3}x_1 + x_6 x_7\\
r_7(x_1,x_5,x_6,x_7) &= \frac{4x_1}{1+\mathrm{e}^{x_5}} \\
s_6(x_2,x_3,x_5,x_6,x_7) &= \frac{1}{3} \left[x_2 + x_3 \cdot \left( 1+\mathrm{e}^{x_5}\right)\right]\\
s_7(x_2,x_3,x_5,x_6,x_7) &= \frac{x_2 + x_3 \cdot \left( 1+\mathrm{e}^{x_5}\right)}{1+\mathrm{e}^{x_5}} \\
t_6(x_2,x_3,x_5,x_6,x_7) &=\frac{x_2 + x_3 \cdot \left( 1+\mathrm{e}^{x_5}\right)}{1+\mathrm{e}^{x_5}} \\
t_7(x_2,x_3,x_5,x_6,x_7) &= \frac{3 \left[ x_2 + x_3 \cdot \left( 1+\mathrm{e}^{x_5}\right)\right]}{\left(1+\mathrm{e}^{x_5}\right)^2}  \\
F_6(x_5,x_6,x_7) &= x_6 x_7 \qquad F_4 = F_5 = F_7 = 0
\end{align*}
satisfy the Eqs. \eqref{Eq:GL1a1} - \eqref{Eq:GL1a4}, and the metric defined by them has holonomy $\mathfrak{h} = \mathfrak{gl}(2,\mathbb{R}) \ltimes \mathfrak{m}(1,2) \subset \mathfrak{h}^{I\!I\!I}$.
\end{example}
%
%
%
\subsection{Holonomy of Type III 1b}
\begin{prop}\label{Th:Case1b}
The holonomy of $(M^{4,3},g)$ is contained in $\mathrm{G}_2^{*}$ and is of Type III 1a if and only if there are local coordinates $x_1,\ldots,x_7$ such that $g = 2(b^1\cdot b^5 + b^2 \cdot b^6 + b^3\cdot b^7) - (b^4)^2$, where
\begin{align*}
b^{1} &= \mathrm{d}{x_1} + r_5(x_4,x_5,x_6,x_7) \cdot  \mathrm{d}{x_5} + r_6(x_5,x_6,x_7) \cdot  \mathrm{d}{x_6} + r_7(x_5,x_6,x_7) \cdot  \mathrm{d}{x_7}\\
b^{2} &= \mathrm{d}{x_2} + s_6(x_2,x_3,x_5,x_6,x_7)  \cdot  \mathrm{d}{x_6}  + s_7(x_2,x_3,x_5,x_6,x_7)  \cdot  \mathrm{d}{x_7}\\
b^{3} &= \mathrm{d}{x_3} +t_6(x_2,x_3,x_5,x_6,x_7)  \cdot  \mathrm{d}{x_6}+ t_7(x_2,x_3,x_5,x_6,x_7)   \cdot  \mathrm{d}{x_7}\\
b^{i} &= \mathrm{d}{x_i} \qquad \qquad i = 4,5,6,7,
\end{align*}
with functions $r_5,s_7$ given by
\begin{align*}
r_5 &= \frac{x_4}{\sqrt{2}}\left[( r_6(x_5,x_6,x_7))_{x_7} - ( r_7(x_5,x_6,x_7))_{x_6} - (F_4(x_5,x_6,x_7))_{x_5}\right]+ \!F_5(x_5,x_6,x_7)\\
s_7 &= t_6 + F_4(x_5,x_6,x_7)
\end{align*}
for arbitrary functions $F_5,r_6,r_7$ and functions $s_6,t_6,t_7,F_4$ satisfying the differential equations
\begin{align*}
 (s_6)_{x_2}&=  -(t_6)_{x_3}\\
 (t_7)_{x_3}  &= -(t_6)_{x_2}\\
  (s_7)_{x_6}+t_7 \cdot (s_6)_{x_3} + s_7 \cdot (s_6)_{x_2}&=  (s_6)_{x_7} + t_6 \cdot (s_7)_{x_3} + s_6 \cdot (s_7)_{x_2}   \\
 (t_7)_{x_6}+ t_7 \cdot (t_6)_{x_3} + s_7 \cdot (t_6)_{x_2}  &=   (t_6)_{x_7} + t_6 \cdot (t_7)_{x_3} + s_6 \cdot (t_7)_{x_2}\hspace{.5em}.
\end{align*}
\end{prop}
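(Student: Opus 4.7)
The proof closely parallels that of Proposition \ref{Th:Case1a} (Type III 1a); the essential difference is that the connection form now takes values in $\mathfrak{sl}(2,\mathbb{R})\ltimes\mathfrak{m}(1,2)$, so the entries must satisfy the trace-free constraint $\mathbf{a}_1 + \mathbf{a}_4 = 0$. My plan is to write the structure equation with $\mathbf{a}_4 = -\mathbf{a}_1$ inserted throughout, and then repeat the exterior-differential-system analysis of Case 1a, tracking how the trace-free condition propagates through each step.

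The first stage of the analysis carries over verbatim: passing to coordinates such that $b^6 = \mathrm{d}x_6$ and $b^7 = \mathrm{d}x_7$ after a $\mathrm{GL}(2,\mathbb{R})$-transformation, deducing $\mathbf{a}_1,\mathbf{a}_2,\mathbf{a}_3\in\mathcal{I}_0$, and handling $b^4$ together with $\mathbf{v}\in\mathcal{I}(\mathrm{d}x_5)$. The first genuine simplification appears in the equation for $b^5$: the diagonal coefficient $-(\mathbf{a}_1+\mathbf{a}_4)$ vanishes identically, so $\mathrm{d}b^5 = 0$ and hence $b^5 = \mathrm{d}x_5$ directly (equivalently, the function $u$ from Case 1a is now identically zero). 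For $b^1$, the vanishing of the $(1,1)$-entry of the connection matrix removes the $\wedge b^1$ contribution from $\mathrm{d}b^1$; combined with the remaining $\mathcal{I}_1$-information, this forces $(r_6)_{x_1}=(r_7)_{x_1}=0$, so $r_6,r_7$ depend only on $x_5,x_6,x_7$. The derivation of the forms of $b^2$ and $b^3$ is unchanged.

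With the adapted frame in place, I would compute the components $\theta^i_j = b^i(\nabla b_j)$ of the connection form and impose the same torsion-free and holonomy relations as in Case 1a. Under the substitution $u\equiv 0$, equations \eqref{Eq:GL1a1} and \eqref{Eq:GL1a2} collapse to the two first-order conditions $(s_6)_{x_2} = -(t_6)_{x_3}$ and $(t_7)_{x_3} = -(t_6)_{x_2}$ stated in the proposition, while equations \eqref{Eq:GL1a3} and \eqref{Eq:GL1a4} survive untouched. The auxiliary relations $b^i(\nabla b_1) = 0$ for $i=2,3$ now become trivial, which is precisely what removes the $x_1$-dependence from $r_6,r_7$. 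Integrating the equation for $\sqrt{2}(r_5)_{x_4}$ with $u=0$ and $r_6,r_7$ as above produces the stated expression for $r_5$ in terms of the arbitrary function $F_5$ and of $r_6,r_7,F_4$, while the relation $s_7 = t_6 + F_4$ arises exactly as in Case 1a.

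The main technical point to verify is the converse implication: one needs to confirm that the resulting metric genuinely realises holonomy of Type III 1b and not a strictly smaller subalgebra. This is handled by applying the Ambrose--Singer theorem together with Table \ref{Tab:RTypIII}: evaluating the curvature of the constructed metric and showing that the image spans all of $\mathfrak{sl}(2,\mathbb{R})\ltimes \mathfrak{m}(1,2)$, exactly as in the Type III 1a case but with the trace-free restriction imposed on the $\mathfrak{gl}(2,\mathbb{R})$-block. The bookkeeping in this verification step, rather than any conceptual difficulty, is the only real obstacle; everything else reduces to a streamlined specialization of Proposition \ref{Th:Case1a}.
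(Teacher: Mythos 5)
Your proposal matches the paper's proof, which is exactly the observation that the trace-free condition $\mathbf{a}_1+\mathbf{a}_4=0$ forces $\mathrm{d}b^5=0$, so one sets $u\equiv 0$ in the proof of Proposition \ref{Th:Case1a} and repeats the same considerations; your elaboration of how this kills the $x_1$-dependence of $r_6,r_7$ and collapses \eqref{Eq:GL1a1}--\eqref{Eq:GL1a2} to the two stated first-order conditions is consistent with that. The only minor imprecision is that the relations $b^i(\nabla b_1)=0$ do not become trivial but rather reduce to $(r_6)_{x_1}=(r_7)_{x_1}=0$, which is precisely what you need.
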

\begin{proof}
The proof is essentially the same as in case 1a. The structure equation
\begin{align*}
\begin{pmatrix}
\mathrm{d}{b^{1}} \\ \mathrm{d}{b^{2}} \\ \mathrm{d}{b^{3}} \\ \mathrm{d}{b^{4}} \\ \mathrm{d}{b^{5}} \\ \mathrm{d}{b^{6}} \\ \mathrm{d}{b^{7}}
\end{pmatrix} =
-\begin{pmatrix}
0 & 0 & 0 & \sqrt{2} \mathbf{v} & 0 & -\mathbf{y}_{1} & -\mathbf{y}_{2} \\
0 &  \mathbf{a}_1 &  \mathbf{a}_2 & 0  & \mathbf{y}_{1}& 0 & \mathbf{v} \\
0 &  \mathbf{a}_3 &  -\mathbf{a}_1 & 0  & \mathbf{y}_{2}& - \mathbf{v} & 0 \\
0 & 0 & 0 & 0 & \sqrt{2} \mathbf{v}& 0 &0 \\
0 & 0 & 0 & 0 & 0 & 0 &0 \\
0 & 0 & 0 & 0 & 0 &- \mathbf{a}_1  &  -\mathbf{a}_3 \\
0 & 0 & 0 & 0 & 0 &  -\mathbf{a}_2 & \mathbf{a}_1  \\
\end{pmatrix} \wedge 
\begin{pmatrix}
b^{1} \\ b^{2} \\ b^{3} \\ b^{4} \\ b^{5} \\ b^{6} \\ b^{7}
\end{pmatrix}
\end{align*}
implies the choice of $b^5 = \mathrm{d}x_5$, since $\mathrm{d}b^5 = 0$. Thus, set $u\equiv 0$ in the proof of Proposition \ref{Th:Case1a}. Apart from that, following the same considerations as before proves Proposition \ref{Th:Case1b}.
\end{proof}
\begin{example}
One can easily check that the functions 
\begin{align*}
r_5(x_4,x_5,x_6,x_7) &= \frac{1}{\sqrt{2}}x_4 x_6 \qquad r_6(x_5,x_6,x_7) =  x_6 x_7 \qquad r_7(x_5,x_6,x_7) = 0\\
s_6(x_2,x_3,x_5,x_6,x_7) &= -\left[x_2 + x_3 \cdot \left( 1+\mathrm{e}^{x_5}\right)\right]\\
s_7(x_2,x_3,x_5,x_6,x_7) &= \frac{x_2 + x_3 \cdot \left( 1+\mathrm{e}^{x_5}\right)}{1+\mathrm{e}^{x_5}} \\
t_6(x_2,x_3,x_5,x_6,x_7) &=\frac{x_2 + x_3 \cdot \left( 1+\mathrm{e}^{x_5}\right)}{1+\mathrm{e}^{x_5}} \\
t_7(x_2,x_3,x_5,x_6,x_7) &= -\frac{ \left[ x_2 + x_3 \cdot \left( 1+\mathrm{e}^{x_5}\right)\right]}{\left(1+\mathrm{e}^{x_5}\right)^2}\\
\end{align*}
satisfy the conditional equations of the theorem and the metric defined that way has ho\-lo\-no\-my $\mathfrak{h} = \mathfrak{sl}(2,\mathbb{R}) \ltimes \mathfrak{m}(1,2) \subset \mathfrak{h}^{I\!I\!I}$.
\end{example}
%
%
%
\subsection{Holonomy of Type III 1c}
\begin{prop}
The holonomy of $(M^{4,3},g)$ is contained in $\mathrm{G}_2^{*}$ and is of Type III 1c if and only if there are local coordinates $x_1,\ldots,x_7$ such that $g = 2(b^1\cdot b^5 + b^2 \cdot b^6 + b^3\cdot b^7) - (b^4)^2$, where
\begin{align*}
b^{1} &= \mathrm{d}{x_1} + r_5(x_4,x_5,x_6,x_7) \cdot  \mathrm{d}{x_5} + r_6(x_1,x_5,x_6,x_7) \cdot  \mathrm{d}{x_6} + r_7(x_1,x_5,x_6,x_7) \cdot  \mathrm{d}{x_7}\\
b^{2} &= \mathrm{d}{x_2} + s_6(x_2,x_3,x_5,x_6,x_7)  \cdot  \mathrm{d}{x_6}  + s_7(x_2,x_3,x_5,x_6,x_7)  \cdot  \mathrm{d}{x_7}\\
b^{3} &= \mathrm{d}{x_3} +t_6(x_2,x_3,x_5,x_6,x_7)  \cdot  \mathrm{d}{x_6}+ t_7(x_2,x_3,x_5,x_6,x_7)   \cdot  \mathrm{d}{x_7}\\
b^{5} & = \mathrm{e}^{u(x_5,x_6,x_7)} \cdot \mathrm{d}{x_5}\\
b^{6} & =\mathrm{d}{x_6}+ v(x_6,x_7) \cdot \mathrm{d}{x_7}\\
b^{i} &= \mathrm{d}{x_i} \qquad \qquad i = 4,7,
\end{align*}
with functions $r_5, r_6, r_7,t_6,s_6,s_7, t_7 $ given by
\begin{align*}
r_5 &= \frac{x_4}{\sqrt{2}} \left[ \left(\mathrm{e}^u \cdot F_6  (x_5,x_6,x_7)\right)_{x_7} - \left(\mathrm{e}^u \cdot F_7  (x_5,x_6,x_7)\right)_{x_6} - (F_4 (x_5,x_6,x_7))_{x_5} \right]\\
&\hspace*{9cm} + F_5 (x_5,x_6,x_7) \\
r_6 &= x_1 \cdot  (u)_{x_6} +   F_6  (x_5,x_6,x_7) \\
r_7 &= x_1\cdot  (u)_{x_7} +   F_7  (x_5,x_6,x_7) \\
s_6 &=  -\frac{x_3}{2}\cdot \left( (u)_{x_7} - v\cdot  (u)_{x_6}+ 2(v)_{x_6}\right) + \frac{x_2}{2}\cdot (u)_{x_6} +F_1(x_5,x_6,x_7)\\
t_6 &= \frac{x_2}{2}\cdot \left( (u)_{x_7} - v\cdot  (u)_{x_6}+2(v)_{x_6}\right) + \frac{x_3}{2}\cdot (u)_{x_6} +F_2(x_5,x_6,x_7) \\
s_7 &=  t_6 + v\cdot s_6 -x_2 \cdot (v)_{x_6} + F_3(x_5,x_6,x_7)\\
t_7 &=  \frac{x_3}{2}\cdot  (u)_{x_7} - \frac{x_2}{2}\cdot  (u)_{x_6} + v \cdot t_6 + F_4(x_5,x_6,x_7)
\end{align*}
for arbitrary functions $F_5,F_6,F_7$ and functions $u,v,F_1,\ldots,F_4$ satisfying the differential equations
\begin{align}
0 &= -(s_7)_{x_6} + (s_6)_{x_7} + t_6 \cdot (s_7)_{x_3} + s_6 \cdot (s_7)_{x_2}- t_7 \cdot (s_6)_{x_3} - s_7 \cdot (s_6)_{x_2} \label{Eq:GL1cCond1}\\
 0 &=  -(t_7)_{x_6} + (t_6)_{x_7} + t_6 \cdot (t_7)_{x_3} + s_6 \cdot (t_7)_{x_2}- t_7 \cdot (t_6)_{x_3} - s_7 \cdot (t_6)_{x_2} \label{Eq:GL1cCond2}\hspace{.5em}.
\end{align}
\end{prop}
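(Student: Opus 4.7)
The plan is to mirror the strategy of Propositions~\ref{Th:Case1a} and \ref{Th:Case1b}, specialised to the holonomy algebra $\mathfrak{h}=\mathfrak{co}(2)\ltimes\mathfrak{m}(1,2)$. Here the $\mathfrak{gl}(2,\mathbb{R})$-block of the connection form is constrained by $\mathbf{a}_4=\mathbf{a}_1$ and $\mathbf{a}_3=-\mathbf{a}_2$, so only two independent entries survive. As before, the starting observation from Cartan's structure equation is that $\mathrm{d}b^6,\mathrm{d}b^7\in\mathcal{I}(b^6,b^7)$, so by Frobenius there exist coordinates $x_6,x_7$ with $\mathcal{I}(b^6,b^7)=\mathcal{I}(\mathrm{d}x_6,\mathrm{d}x_7)$.

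The first genuinely new step compared to case~1a is that only the subgroup $CO(2)\subset GL(2,\mathbb{R})$ is available for further normalising the pair $(b^6,b^7)$, since a larger transformation would move the connection form out of $\mathfrak{co}(2)$. Acting by pointwise conformal rotations one can arrange $b^7=\mathrm{d}x_7$, but in general a residual term $v(x_6,x_7)\,\mathrm{d}x_7$ remains in $b^6$, giving $b^6=\mathrm{d}x_6+v(x_6,x_7)\,\mathrm{d}x_7$. Then $\mathrm{d}b^7=0$ and $\mathrm{d}b^6 = v_{x_6}\,\mathrm{d}x_6\wedge\mathrm{d}x_7$ place $\mathbf{a}_1,\mathbf{a}_2$ in $\mathcal{I}(\mathrm{d}x_6,\mathrm{d}x_7)$ and tie the function $v$ to their coefficients.

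The remaining co-frame elements are then treated exactly as in the proof of Proposition~\ref{Th:Case1a}: $\mathrm{d}b^5\in\mathcal{I}(b^5)$ yields $b^5=\mathrm{e}^{u(x_5,x_6,x_7)}\mathrm{d}x_5$; the condition $\mathrm{d}b^4\in\mathcal{I}(\mathrm{d}x_5)$ together with a conjugation by $\exp(h(0,\bar v,0))$ yields $b^4=\mathrm{d}x_4$ and forces $\mathbf{v}\in\mathcal{I}(\mathrm{d}x_5)$; the forms $b^2,b^3$ and then $b^1$ lie in $\mathcal{I}_1=\mathcal{I}(\mathrm{d}x_5,\mathrm{d}x_6,\mathrm{d}x_7)$, where a further conjugation by $\exp(h(0,0,\bar y))$ kills the $\mathrm{d}x_5$-coefficients of $b^2,b^3$. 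The claimed coordinate dependencies of $r_5,r_6,r_7,s_6,t_6,s_7,t_7$ then fall out by inspecting each $\mathrm{d}b^i$-equation.

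The final step encodes the condition that the connection form take values in $\mathfrak{co}(2)\ltimes\mathfrak{m}(1,2)$ as a PDE system for the coefficient functions. In addition to the identities already used in case~1a, namely $b^1(\nabla b_1)=b^2(\nabla b_2)+b^3(\nabla b_3)$, $b^1(\nabla b_4)=\sqrt{2}\,b^2(\nabla b_7)$ and $b^i(\nabla b_1)=0$ for $i=2,3$, the $\mathfrak{co}(2)$-restriction produces two further identities $b^2(\nabla b_2)=b^3(\nabla b_3)$ and $b^2(\nabla b_3)+b^3(\nabla b_2)=0$. Integrating this augmented system in the variables $x_1,x_2,x_3,x_4$ yields the explicit formulas for $s_6,t_6,s_7,t_7$ (with their characteristic linear dependence on $x_2,x_3$ entangled through $u$ and $v$) and for $r_5,r_6,r_7$, leaving the integrability conditions \eqref{Eq:GL1cCond1}, \eqref{Eq:GL1cCond2} as the residual constraints; the arbitrary integration functions $F_1,\ldots,F_7$ absorb the remaining freedom in $(x_5,x_6,x_7)$. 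The main obstacle will be organising these coupled $\mathfrak{co}(2)$-PDEs coherently so that the linear-in-$(x_2,x_3)$ structure of $s_6,t_6,s_7,t_7$ falls out in precisely the form stated, rather than in a less transparent but equivalent one.
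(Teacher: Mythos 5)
Your proposal follows essentially the same route as the paper: normalise the coframe using the residual $\mathrm{CO}(2)$-action and Frobenius (leaving the characteristic term $v\,\mathrm{d}x_7$ in $b^6$), then impose the $\mathfrak{co}(2)\ltimes\mathfrak{m}(1,2)$ conditions on the connection form --- including precisely the two extra identities $b^2(\nabla b_2)=b^3(\nabla b_3)$ and $b^3(\nabla b_2)=-b^2(\nabla b_3)$ --- and integrate, leaving \eqref{Eq:GL1cCond1} and \eqref{Eq:GL1cCond2} as the residual constraints. The only detail you gloss over is that after the pointwise conformal transformation one still needs a coordinate change of the form $x_6=\int_0^{y_6}g_1(\tau,x_7)\,\mathrm{d}\tau$ to bring the $\mathrm{d}y_6$-coefficient of $b^6$ to $1$; otherwise your outline matches the paper's proof.
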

\begin{proof}
The structure equation
\begin{align*}
\begin{pmatrix}
\mathrm{d}{b^{1}} \\ \mathrm{d}{b^{2}} \\ \mathrm{d}{b^{3}} \\ \mathrm{d}{b^{4}} \\ \mathrm{d}{b^{5}} \\ \mathrm{d}{b^{6}} \\ \mathrm{d}{b^{7}}
\end{pmatrix} =
-\begin{pmatrix}
2\textbf{a} & 0 & 0 & \sqrt{2} \textbf{v} & 0 & -\textbf{y}_{1} & -\textbf{y}_{2} \\
0 &  \textbf{a} &-  \textbf{b} & 0  & \textbf{y}_{1}& 0 & \textbf{v} \\
0 &  \textbf{b} &  \textbf{a} & 0  & \textbf{y}_{2}& - \textbf{v} & 0 \\
0 & 0 & 0 & 0 & \sqrt{2} \textbf{v}& 0 &0 \\
0 & 0 & 0 & 0 &- 2\textbf{a} & 0 &0 \\
0 & 0 & 0 & 0 & 0 &- \textbf{a}  &  -\textbf{b} \\
0 & 0 & 0 & 0 & 0 &  \textbf{b} & -\textbf{a}  \\
\end{pmatrix} \wedge 
\begin{pmatrix}
b^{1} \\ b^{2} \\ b^{3} \\ b^{4} \\ b^{5} \\ b^{6} \\ b^{7}
\end{pmatrix}
\end{align*}
implies the existence of local coordinates $y_6,x_7$ such that $b^6,b^7$ lie in the ideal generated by $dy_6,\mathrm{d}x_7$. Thus, assume
\begin{align*}
b^6 = k_1 \cdot \mathrm{d}y_6 + k_2 \cdot \mathrm{d}x_7 \qquad b^7 = k_3 \cdot \mathrm{d}y_6 + k_4 \cdot \mathrm{d}x_7
\end{align*}
for suitable functions $k_1,\ldots,k_4$. Transforming the basis pointwise by an appropriate element 
\begin{align*}
 \mathrm{GL}(2, \mathbb{R}) \ni M = \begin{pmatrix} m_1 & -m_2 \\ m_2 & m_1 \end{pmatrix}   \cong  \mathrm{diag}(\det M, M,1, (\det M)^{-1},(M^{\top})^{-1}) \in \mathrm{G}_2^*
\end{align*}
yields $b^7 = \mathrm{d}x_7$ and $b^6 = g_1 \cdot \mathrm{d}y_6 + g_2 \cdot \mathrm{d}x_7$ for some new functions $g_1,g_2$. Since $\mathrm{d}b^7=\mathrm{dd}x_7 = 0$, $\mathbf{a}$ and $\mathbf{b}$ lie in $\mathcal{I}(\mathrm{d}y_6,\mathrm{d}x_7)$, the ideal generated by $\mathrm{d}y_6$ and $\mathrm{d}x_7$, and $g_1,g_2$ depend on $y_6,x_7$ only. Therefore, it is possible to introduce a new local coordinate $x_6$ by 
\begin{align*}
x_6 = \int_{0}^{y_6} g_1(\tau,x_7) \;d\tau
\end{align*}
such that $b^6 = \mathrm{d}x_6 + v(x_6,x_7) \cdot \mathrm{d}x_7$ with an appropriate function $v$. Consequently, one has $\mathbf{a},\mathbf{b}$ $\in \mathcal{I}(\mathrm{d}x_6,\mathrm{d}x_7)$.

The structure equation for $\mathrm{d}b^5$ allows the choice $b^5 = \mathrm{e}^{u} \cdot \mathrm{d}x_5$ for a suitable local function $u$ and coordinate $x_5$. Since $\mathbf{a} \in \mathcal{I}(\mathrm{d}x_6,\mathrm{d}x_7)$, it follows $u = u(x_5,x_6,x_7)$. 

From the structure equation deduce $b^4 = \mathrm{d}x_4 + q \cdot \mathrm{d}x_5$. After a change of basis using a suitable element 
$\exp(h(0,v,0))$ obtain $b^4 =\mathrm{d}x_4$ and $\mathrm{d}b^4 = 0$. Hence, $\mathbf{v} \in \mathcal{I}(\mathrm{d}x_5)$.

Next, the structure equation implies $\mathrm{d}b^2,\mathrm{d}b^3 \in \mathcal{I}(\mathrm{d}x_5,\mathrm{d}x_6,dx_7)$, as $\mathbf{a}$, $\mathbf{b}$ $\in \mathcal{I}(\mathrm{d}x_6,\mathrm{d}x_7)$, $\mathbf{v} \in \mathcal{I}(\mathrm{d}x_5)$ and $b^5 = \mathrm{e}^{u} \cdot \mathrm{d}x_5$. It follows
\begin{align*}
b^2 &= \mathrm{d}{x_2} + s_5 \cdot  \mathrm{d}{x_5}+ s_6 \cdot  \mathrm{d}{x_6}+ s_7 \cdot  \mathrm{d}{x_7}\\
b^3 &= \mathrm{d}{x_3} + t_5 \cdot  \mathrm{d}{x_5}+ t_6 \cdot  \mathrm{d}{x_6}+ t_7 \cdot  \mathrm{d}{x_7} \hspace{.5em},
\end{align*}
which reduces to 
\begin{align*}
b^2 &= \mathrm{d}{x_2} +  s_6 \cdot  \mathrm{d}{x_6}+ s_7 \cdot  \mathrm{d}{x_7} \\
b^3 &= \mathrm{d}{x_3} +  t_6 \cdot  \mathrm{d}{x_6}+ t_7 \cdot  \mathrm{d}{x_7}
\end{align*}
after transforming the basis with a suitable element $\exp(h(0,0,(y_1,y_2)^{\top}))$. Comparing both sides of the equations 
\begin{align*}
 \mathrm{d}{b^{2}} &= \mathrm{d}{s_6} \wedge  \mathrm{d}{x_6}+\mathrm{d}{s_7} \wedge  \mathrm{d}{x_7} = - \textbf{a} \wedge b^{2} + \textbf{b} \wedge b^{3} -\textbf{y}_1 \wedge b^{5} - \textbf{v} \wedge b^{7} \\
  \mathrm{d}{b^{3}} &= \mathrm{d}{t_6} \wedge  \mathrm{d}{x_6}+\mathrm{d}{t_7} \wedge  \mathrm{d}{x_7} = - \textbf{b} \wedge b^{2}- \textbf{a} \wedge b^{3} -\textbf{y}_2 \wedge b^{5}+\textbf{v} \wedge b^{6} 
\end{align*}
yields $\mathbf{y}_1,\mathbf{y}_2\in \mathcal{I}(\mathrm{d}x_5,\mathrm{d}x_6,\mathrm{d}x_7)$ and 
\begin{align*}
s_6 &= s_6 (x_2,x_3,x_5,x_6,x_7), \qquad s_7 = s_7 (x_2,x_3,x_5,x_6,x_7),\\
  t_6 &= t_6 (x_2,x_3,x_5,x_6,x_7), \qquad \;t_7 = t_7 (x_2,x_3,x_5,x_6,x_7) \hspace{.5em}.
\end{align*}
An analogue consideration for $b^1$, keeping $\mathbf{y}_1,\mathbf{y}_2\in \mathcal{I}(\mathrm{d}x_5,\mathrm{d}x_6,\mathrm{d}x_7)$ in mind, leads to 
\begin{align*}
b^1 = \mathrm{d}{x_1} + r_5(x_4,x_5,x_6,x_7) \cdot  \mathrm{d}{x_5}+ r_6(x_4,x_5,x_6,x_7) \cdot  \mathrm{d}{x_6}+r_7(x_4,x_5,x_6,x_7) \cdot  \mathrm{d}{x_7} \hspace{.2em}.
\end{align*}
Again, the metric $g$ is used, with respect to the local coordinates $x_1,\ldots,x_7$ considered so far, in order to calculate the components of the local connection form $\theta$. The holonomy of $g$ is contained in $\mathfrak{co}(2) \ltimes \mathfrak{m}(1,2)$ if and only if the following relations between the components hold
\begin{align*}
 b^1 (\nabla b_1) &= b^2 (\nabla b_2) +b^3 (\nabla b_3), \qquad
  b^2 (\nabla b_2)  = b^3 (\nabla b_3), \qquad
 b^3 (\nabla b_2)  = -b^2 (\nabla b_3), \\
 b^1 (\nabla b_4)  &= \sqrt{2}\cdot b^2 (\nabla b_7), \qquad
 b^i (\nabla b_1) = 0 \quad i = 2,3 \hspace{.5em}.
\end{align*}
The remaining relations are either trivial or identically satisfied. The relations give rise to the following system of partial differential equations\\[\baselineskip]
\noindent $b^1 (\nabla b_1) = b^2 (\nabla b_2) +b^3 (\nabla b_3) : $ \hfill
 \begin{align}
 (u)_{x_6} + (r_6)_{x_1} &= 2 \cdot (s_6)_{x_2} + (s_7)_{x_3} + (t_6)_{x_3}-v\cdot (s_6)_{x_3}\nonumber \\
 \begin{split}
 (u)_{x_7} -v\cdot  (u)_{x_7}+ (r_7)_{x_1} - v \cdot  (r_6)_{x_1} &= 2 \cdot \left( (t_7)_{x_3} -v \cdot (t_6)_{x_3} \right)\\ &\hspace{1cm}+ (s_7)_{x_2} + (t_6)_{x_2} -v \cdot (s_6)_{x_2} - (v)_{x_6} 
 \end{split}\nonumber
 \end{align}
$ b^2 (\nabla b_2)  = b^3 (\nabla b_3)  :$ \hfill
\begin{align}
 2 \cdot (s_6)_{x_2} &= (s_7)_{x_3} + (t_6)_{x_3}-v\cdot (s_6)_{x_3}\nonumber\\
  2 \cdot \left( (t_7)_{x_3} -v \cdot (t_6)_{x_3} \right)&=  (s_7)_{x_2} + (t_6)_{x_2} -v \cdot (s_6)_{x_2} - (v)_{x_6} v \nonumber
\end{align}
$b^3 (\nabla b_2)  = -b^2 (\nabla b_3) :$ \hfill
\begin{align}
  -2 \cdot (s_6)_{x_3} &= (s_7)_{x_2} + (t_6)_{x_2} -v\cdot (s_6)_{x_2} + (v)_{x_6} \nonumber\\
 -2 \cdot \left( (t_7)_{x_2} -v \cdot (t_6)_{x_2} \right)&= (s_7)_{x_3} + (t_6)_{x_3} -v\cdot (s_6)_{x_3}\nonumber
\end{align}
$ b^1 (\nabla b_4)  = \sqrt{2} b^2 (\nabla b_7) :$ \hfill
\begin{align}
\sqrt{2} (r_5)_{x_4}& = u\cdot \left[ -(r_7)_{x_6} + (r_6)_{x_7} +r_6 \cdot ( r_7)_{x_1}-(r_6)_{x_1} \cdot r_7\right]\\
&\hspace{12em}-(s_7)_{x_5} + (t_6)_{x_5}+v\cdot (s_6)_{x_5}\nonumber\\
 (s_7)_{x_2} &= (t_6)_{x_2}+v\cdot (s_6)_{x_2}-(v)_{x_6}  \nonumber\\
 (s_7)_{x_3} &= (t_6)_{x_3}+v\cdot (s_6)_{x_3} \nonumber\\
 0 &= -(s_7)_{x_6} + (s_6)_{x_7} + t_6 \cdot (s_7)_{x_3} + s_6 \cdot (s_7)_{x_2}- t_7 \cdot (s_6)_{x_3} - s_7 \cdot (s_6)_{x_2}  \label{Eq:GL1c1}\\
 0 &=  -(t_7)_{x_6} + (t_6)_{x_7} + t_6 \cdot (t_7)_{x_3} + s_6 \cdot (t_7)_{x_2}- t_7 \cdot (t_6)_{x_3} - s_7 \cdot (t_6)_{x_2}  \label{Eq:GL1c2}%
\end{align}
$ b^2 (\nabla b_1) = 0 :$
\begin{align*} 
  (u)_{x_6} = (r_6)_{x_1} \nonumber
\end{align*}
$ b^3 (\nabla b_1) = 0 :$
\begin{align*}
  (u)_{x_7} -v\cdot  (u)_{x_6} + v \cdot (r_6)_{x_1}&= (r_7)_{x_1} \hspace{.5em}.
\end{align*} 
 The system can be solved by substitution and integration. Along with the integration one has to introduce new function $F_1,\ldots,F_7$. Finally the system is equivalent to the form of the functions $r_5,r_6,r_7,t_6,t_7,s_6,s_7$ as stated in the proposition, except the Eqs. \eqref{Eq:GL1c1} and \eqref{Eq:GL1c2} which are equivalent to the conditional equations \eqref{Eq:GL1cCond1} and \eqref{Eq:GL1cCond2}. 
\end{proof}
\begin{example}
It is easy to verify that the functions given by the choice
\begin{align*}
u(x_5,x_6,x_7) = x_5 x_6 \qquad F_6(x_5,x_6,x_7) = x_5 x_6 x_7 \qquad v = F_1 = \ldots = F_5 = F_7 = 0
\end{align*}
satisfy the conditional equations, and that the holonomy, with respect to the metric $g$ as defined, is $\mathfrak{h} = \mathfrak{co}(2) \ltimes \mathfrak{m}(1,2)$.
\end{example}
\subsection{Holonomy of Type III 1d}
\begin{prop}
The holonomy of $(M^{4,3},g)$ is contained in $\mathrm{G}_2^{*}$ and is of Type III 1d if and only if there are local coordinates $x_1,\ldots,x_7$ such that $g = 2(b^1\cdot b^5 + b^2 \cdot b^6 + b^3\cdot b^7) - (b^4)^2$, where
\begin{align*}
b^{1} &= \mathrm{d}{x_1} + r_5(x_4,x_5,x_6,x_7) \cdot  \mathrm{d}{x_5} + r_6(x_1,x_5,x_6,x_7) \cdot  \mathrm{d}{x_6} + r_7(x_1,x_5,x_6,x_7) \cdot  \mathrm{d}{x_7}\\
b^{2} &= \mathrm{d}{x_2} + s_6(x_2,x_5,x_6)  \cdot  \mathrm{d}{x_6} \\
b^{3} &= \mathrm{d}{x_3} + t_7(x_3,x_5,x_7)   \cdot  \mathrm{d}{x_7}\\
b^{5} & = \mathrm{e}^{u(x_5,x_6,x_7)} \cdot \mathrm{d}{x_5}\\
b^{i} &= \mathrm{d}{x_i} \qquad \qquad i = 4,6,7 \hspace{.5em},
\end{align*}
with functions $r_5, r_6, r_7,s_6, t_7 $ given by
\begin{align*}
r_5 &= \frac{x_4}{\sqrt{2}} \left[ \left(\mathrm{e}^{u} \cdot F_6  (x_5,x_6,x_7)\right)_{x_7} - \left(\mathrm{e}^{u} \cdot F_7  (x_5,x_6,x_7)\right)_{x_6} \right] + F_5 (x_5,x_6,x_7) \\
r_6 &= x_1 \cdot (u)_{x_6} +   F_6  (x_5,x_6,x_7) \\
r_7 &= x_1 \cdot (u)_{x_7} +   F_7  (x_5,x_6,x_7) \\
s_6 &= x_2\cdot(u)_{x_6} + F_1 (x_5,x_6)\\
t_7 &= x_3\cdot(u)_{x_7} + F_2 (x_5,x_7)
\end{align*}
for arbitrary functions $F_1,F_2,F_4,\ldots,F_7$ and a function $u$ satisfying the equations
\begin{align*}
(u)_{x_6} = K_1(x_5,x_6) \qquad (u)_{x_7} = K_2(x_5,x_7) \hspace{.5em},
\end{align*}
where $K_1,K_2$ are arbitrary functions.
\end{prop}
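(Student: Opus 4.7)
The plan is to follow the same template as the earlier Propositions, specialising the connection form to the Type III 1d algebra. I would begin by writing the structure equation with the $\mathfrak{gl}(2,\mathbb{R})$-block of $\theta$ reduced to $\mathbf{a}_1 = \mathbf{a}$ (so that the trace is $\mathbf{a}$) and $\mathbf{a}_2 = \mathbf{a}_3 = \mathbf{a}_4 = 0$. This immediately forces $\mathrm{d}b^7 = 0$ and $\mathrm{d}b^6 \in \mathcal{I}(b^6)$, so by Frobenius, together with the single scaling available from $\exp(\mathbb{R}\cdot h(\mathrm{diag}(1,0),0,0))$, there are coordinates with $b^7 = \mathrm{d}x_7$ and $b^6 = \mathrm{d}x_6$; this in turn forces $\mathbf{a}\in\mathcal{I}(\mathrm{d}x_6,\mathrm{d}x_7)$. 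Exactly as in case 1a, one then fixes $b^5 = \mathrm{e}^{u(x_5,x_6,x_7)}\,\mathrm{d}x_5$, and after a translation by $\exp(h(0,v,0))$ obtains $b^4 = \mathrm{d}x_4$, whence $\mathbf{v}\in\mathcal{I}(\mathrm{d}x_5)$.

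The new feature of case 1d is a further decoupling of $b^2$ and $b^3$. Because $\mathbf{a}_2 = \mathbf{a}_3 = 0$, the structure equations read $\mathrm{d}b^2 = -\mathbf{a}\wedge b^2 - \mathbf{y}_1\wedge b^5 - \mathbf{v}\wedge b^7$ and $\mathrm{d}b^3 = -\mathbf{y}_2\wedge b^5 + \mathbf{v}\wedge b^6$, with no $b^2,b^3$ cross-coupling. I would then apply the usual $\exp(h(0,0,(y_1,y_2)^\top))$-gauge to eliminate the $\mathrm{d}x_5$-coefficients of $b^2, b^3$, and then examine the $\mathrm{d}x_6\wedge\mathrm{d}x_7$ components of the two equations. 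I expect the mixed coefficients to be eliminated outright (rather than being merged into an $F_4$ as in case 1a), leaving $b^2 = \mathrm{d}x_2 + s_6\,\mathrm{d}x_6$ with $s_6 = s_6(x_2,x_5,x_6)$ and $b^3 = \mathrm{d}x_3 + t_7\,\mathrm{d}x_7$ with $t_7 = t_7(x_3,x_5,x_7)$. An analogous treatment of $\mathrm{d}b^1$ then yields $b^1 = \mathrm{d}x_1 + r_5\,\mathrm{d}x_5 + r_6\,\mathrm{d}x_6 + r_7\,\mathrm{d}x_7$ with the claimed variable dependencies.

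The final step is to write out the Levi-Civita connection of $g = 2(b^1\!\cdot b^5 + b^2\!\cdot b^6 + b^3\!\cdot b^7) - (b^4)^2$ in this coframe and require that $\theta$ take values in $\mathbb{R}\cdot h(\mathrm{diag}(1,0),\cdot,\cdot) \ltimes \mathfrak{m}(1,2)$. The characterising relations are $b^1(\nabla b_1) = b^2(\nabla b_2)$, $b^3(\nabla b_3) = 0$, $b^1(\nabla b_4) = \sqrt{2}\,b^2(\nabla b_7)$ and $b^i(\nabla b_1) = 0$ for $i = 2,3$; the remaining components are either trivial or empty, as in the previous propositions. Substitution and integration will then produce the explicit closed forms for $r_5,r_6,r_7,s_6,t_7$, with the $F_i$ entering as integration constants. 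The main obstacle I foresee is that the conditions $b^i(\nabla b_1)=0$ split into two independent pieces depending respectively on $x_6$ and $x_7$ only, forcing $u_{x_6}$ to be a function of $(x_5,x_6)$ alone and $u_{x_7}$ of $(x_5,x_7)$ alone. These constraints cannot be integrated away and survive as the compatibility PDEs $u_{x_6}=K_1(x_5,x_6)$, $u_{x_7}=K_2(x_5,x_7)$, which is where the \emph{quasi} in ``quasi-normal form'' originates. A final check via Ambrose--Singer and Table~\ref{Tab:RTypIII} that for a generic choice of the free data the resulting holonomy algebra is exactly $\mathbb{R}\cdot\mathrm{diag}(1,0)\ltimes\mathfrak{m}(1,2)$ would then complete the proof.
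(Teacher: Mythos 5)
There is a genuine error at the very first step: you have specialised the connection form to the wrong algebra. Type III 1d is $\mathfrak{h}=\mathfrak{d}\ltimes\mathfrak{m}(1,2)$ with $\mathfrak{d}=\{\mathrm{diag}(a,d)\}$ the \emph{two}-dimensional diagonal algebra, so the correct reduction keeps two independent $1$-forms $\mathbf{a}=\mathbf{a}_1$ and $\mathbf{d}=\mathbf{a}_4$ and only sets $\mathbf{a}_2=\mathbf{a}_3=0$; the trace entry is $\mathbf{a}+\mathbf{d}$. By instead imposing $\mathbf{a}_4=0$ you are working with the one-dimensional algebra $\mathbb{R}\cdot\mathrm{diag}(1,0)\ltimes\mathfrak{m}(1,2)$, which in the paper's labelling is Type III 2b, treated in a separate proposition with a different normal form. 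Your characterising relations betray the same confusion: $b^3(\nabla b_3)=0$ is not a condition for 1d, and $b^1(\nabla b_1)=b^2(\nabla b_2)$ must be replaced by $b^1(\nabla b_1)=b^2(\nabla b_2)+b^3(\nabla b_3)$.

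This is not merely a labelling slip; it breaks the derivation of the stated coframe. With $\mathbf{a}_4=0$ the structure equation gives $\mathrm{d}b^3=-\mathbf{y}_2\wedge b^5+\mathbf{v}\wedge b^3$-free terms all lying in $\mathcal{I}(\mathrm{d}x_5)$, so after the $\exp(h(0,0,\bar y))$-gauge you would get $b^3=\mathrm{d}x_3$ outright, not $b^3=\mathrm{d}x_3+t_7(x_3,x_5,x_7)\,\mathrm{d}x_7$; the $x_3$-dependence of $t_7$ (and likewise the $x_2$-dependence of $s_6$) comes precisely from the terms $-\mathbf{d}\wedge b^3$ and $-\mathbf{a}\wedge b^2$ with $\mathbf{d}\in\mathcal{I}(\mathrm{d}x_7)$, $\mathbf{a}\in\mathcal{I}(\mathrm{d}x_6)$, which your reduction deletes. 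Similarly, the constraints $(u)_{x_6}=K_1(x_5,x_6)$ and $(u)_{x_7}=K_2(x_5,x_7)$ do not come from $b^i(\nabla b_1)=0$ alone, as you suggest, but from combining $b^i(\nabla b_1)=0$ (which gives $(u)_{x_6}=(r_6)_{x_1}$, $(u)_{x_7}=(r_7)_{x_1}$) with the trace relation $b^1(\nabla b_1)=b^2(\nabla b_2)+b^3(\nabla b_3)$ (which then yields $(u)_{x_6}=(s_6)_{x_2}$ and $(u)_{x_7}=(t_7)_{x_3}$) and the already-established variable dependencies of $s_6$ and $t_7$. The overall template (Frobenius reductions, gauge transformations by $\mathrm{GL}(2,\mathbb{R})$ and $\exp(h(0,v,y))$, then the PDE system from the connection components) is the right one and matches the paper; you need to redo it with the full two-parameter diagonal block.
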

\begin{proof}
The structure equation
\begin{align*}
\begin{pmatrix}
\mathrm{d}{b^{1}} \\ \mathrm{d}{b^{2}} \\ \mathrm{d}{b^{3}} \\ \mathrm{d}{b^{4}} \\ \mathrm{d}{b^{5}} \\ \mathrm{d}{b^{6}} \\ \mathrm{d}{b^{7}}
\end{pmatrix} =
-\begin{pmatrix}
 \textbf{a}+\textbf{d} & 0 & 0 & \sqrt{2} \textbf{v} & 0 & -\textbf{y}_{1} & -\textbf{y}_{2} \\
0 &  \textbf{a} & 0 & 0  & \textbf{y}_{1}& 0 & \textbf{v} \\
0 & 0 &  \textbf{d} & 0  & \textbf{y}_{2}& - \textbf{v} & 0 \\
0 & 0 & 0 & 0 & \sqrt{2} \textbf{v}& 0 &0 \\
0 & 0 & 0 & 0 &- \textbf{a}-\textbf{d} & 0 &0 \\
0 & 0 & 0 & 0 & 0 &- \textbf{a}  & 0 \\
0 & 0 & 0 & 0 & 0 & 0 & -\textbf{d}  \\
\end{pmatrix} \wedge 
\begin{pmatrix}
b^{1} \\ b^{2} \\ b^{3} \\ b^{4} \\ b^{5} \\ b^{6} \\ b^{7}
\end{pmatrix}
\end{align*}
allows the choice of $b^6 = \mathrm{d}x_6$ and $b^7 = \mathrm{d}x_7$ after changing the basis with a suitable element
\begin{align*}
 \mathrm{GL}(2,\mathbb{R}) \ni M = \begin{pmatrix} m_1 & 0 \\ 0 & m_2 \end{pmatrix} \cong  \mathrm{diag}(\det M, M,1, (\det M)^{-1},(M^{\top})^{-1}) \in \mathrm{G}_2^* \hspace{.5em}.
\end{align*}
Therefore, $\mathbf{a} \in \mathcal{I}(\mathrm{d}x_6)$ and $\mathbf{d} \in \mathcal{I}(\mathrm{d}x_7)$. The equation for $\mathrm{d}b^5$ implies $b^5 = \mathrm{e}^{u} \cdot \mathrm{d}x_5$ for a suitable local function $u$ and coordinate $x_5$. Then, $(\mathbf{a}+\mathbf{d}) \in \mathcal{I}(\mathrm{d}x_6,\mathrm{d}x_7)$ implies $u = u(x_5,x_6,x_7)$.

Next, the equation for $\mathrm{d}b^4$ implies $b^4 = \mathrm{d}x_4 + q \cdot \mathrm{d}x_5$, and with a transformation using a suitable element $\exp(h(0,v,0))$ one arrives at $b^4 = \mathrm{d}x_4$. From $\mathrm{d}b^4 = 0$ deduce $\mathbf{v} \in \mathcal{I}(\mathrm{d}x_5)$. 

Since $\mathbf{a} \in \mathcal{I}(\mathrm{d}x_6)$, $\mathbf{d} \in \mathcal{I}(\mathrm{d}x_7)$, $\mathbf{v} \in \mathcal{I}(\mathrm{d}x_5)$ and $b^5 = \mathrm{e}^{u} \cdot \mathrm{d}x_5$, from the structure equation follows $\mathrm{d}b^2 \in \mathcal{I}(\mathrm{d}x_5,\mathrm{d}x_6)$ and $\mathrm{d}b^3 \in \mathcal{I}(\mathrm{d}x_5,\mathrm{d}x_7)$. Thus, choose 
\begin{align*}
b^2 = \mathrm{d}x_2 + s_5 \cdot \mathrm{d}x_5 + s_6 \cdot \mathrm{d}x_6 \qquad b^3 = \mathrm{d}x_3 + t_5 \cdot \mathrm{d}x_5 + t_7 \cdot \mathrm{d}x_7\hspace{.5em},
\end{align*}
and after a change of basis with an appropriate element $\exp(h(0,0,(y_1,y_2)^{\top}))$ obtain
\begin{align*}
b^2 = \mathrm{d}x_2  + s_6 \cdot \mathrm{d}x_6 \qquad b^3 = \mathrm{d}x_3  + t_7 \cdot \mathrm{d}x_7\hspace{.5em}.
\end{align*}
Comparing both sides of the equations 
\begin{align*}
\mathrm{d}{b^{2}} &= d{s_6} \wedge \mathrm{d}{x_6}  = - \textbf{a} \wedge b^{2} -\textbf{y}_1 \wedge b^{5} - \textbf{v} \wedge b^{7} \\
\mathrm{d}{b^{3}} &= \mathrm{d}{t_7} \wedge \mathrm{d}{x_7} = - \textbf{d} \wedge b^{3} -\textbf{y}_2 \wedge b^{5}+\textbf{v} \wedge b^{6} 
\end{align*}
yields $\mathbf{y}_1,\mathbf{y}_1 \in \mathcal{I}(\mathrm{d}x_5,\mathrm{d}x_6,\mathrm{d}x_7)$, $s_6=s_6(x_2,x_5,x_6)$ and $t_7 = t_7(x_3,x_5,x_7)$.

Finally the structure equation implies $\mathrm{d}b^1 \in \mathcal{I}(\mathrm{d}x_5,\mathrm{d}x_6,\mathrm{d}x_7)$ so that the assumption
\begin{align*}
b^1 = \mathrm{d}{x_1} + r_5 \cdot  \mathrm{d}{x_5}+ r_6 \cdot  \mathrm{d}{x_6}+r_7 \cdot  \mathrm{d}{x_7} \hspace{.5em}.
\end{align*}
is possible.
Comparing the expressions for $\mathrm{d}b^1$
\begin{align*}
 \mathrm{d}r_5 \wedge \mathrm{d}x_5 + \mathrm{d}r_6 \wedge \mathrm{d}x_6 + \mathrm{d}r_7 \wedge \mathrm{d}x_7 = -(\mathbf{a}+\mathbf{d}) \wedge b^1 - \sqrt{2}\mathbf{v} \wedge b^4 + \mathbf{y}_1 \wedge b^6 + \mathbf{y}_2 \wedge b^7
\end{align*}
leads to the dependencies 
\begin{align*}
r_5 = r_5( x_4,x_5,x_6,x_7) \qquad  r_6 = r_6( x_1,x_5,x_6,x_7) \qquad  r_7 = r_7( x_1,x_5,x_6,x_7) \hspace{.5em}.
\end{align*}

Using the metric $g$ with respect to the local coordinates allows the calculation of the components of the local connection form. The holonomy of $g$ is contained in $\mathfrak{d} \ltimes \mathfrak{m}(1,2)$ if and only if the following relations are satisfied:
\begin{align*}
 b^1 (\nabla b_1) &= b^2 (\nabla b_2) +b^3 (\nabla b_3), \quad
 b^1 (\nabla b_4)  = \sqrt{2} \cdot b^2 (\nabla b_7), \quad
 b^i (\nabla b_1) = 0 \quad i = 2,3 \hspace{.5em}.
\end{align*}
These relations are equivalent to the following partial differential system:\\[\baselineskip]
\noindent $b^1 (\nabla b_1) = b^2 (\nabla b_2) +b^3 (\nabla b_3) : $ \hfill
\begin{align}
  (u)_{x_6} + (r_6)_{x_1} &= 2 \cdot (s_6)_{x_2} \label{Eq:GL1d1} \\
  (u)_{x_7} + (r_7)_{x_1} &= 2 \cdot (t_7)_{x_3} \label{Eq:GL1d2}
 \end{align}
 $b^1 (\nabla b_4) = \sqrt{2} \cdot b^2 (\nabla b_7) : $ \hfill
 \begin{align}
 \sqrt{2} (r_5)_{x_4}& = \mathrm{e}^{u}\cdot \left[ -(r_7)_{x_6} + (r_6)_{x_7} +r_6 \cdot ( r_7)_{x_1}-(r_6)_{x_1} \cdot r_7\right]\nonumber
 \end{align}
  $ b^2 (\nabla b_1) = 0 :$ \hfill
 \begin{align} 
  (u)_{x_6} = (r_6)_{x_1} \nonumber
\end{align}
$ b^3 (\nabla b_1) = 0 :$ \hfill
\begin{align}
  (u)_{x_7} &= (r_7)_{x_1}\nonumber \hspace{.5em},
 \end{align}
which can be easily integrated and yield the form of the functions $r_5,r_6,r_7,s_6,t_7$ as stated in the proposition. Since $s_6$ and $t_7$ depend on $x_5,x_6$ and $x_5,x_7$ respectively, equations \eqref{Eq:GL1d1} and \eqref{Eq:GL1d2} give rise to the constraints on the derivatives of $u$.
\end{proof}
\begin{example}
With the choice 
\begin{align*}
u(x_5,x_6,x_7) = \mathrm{e}^{x_5 \cdot (x_6+x_7)} \qquad F_6(x_5,x_6,x_7) = x_5 x_6 x_7 \qquad F_1 = F_2 =F_5 =F_7 =0
\end{align*}
the conditional equations are fulfilled and the holonomy with respect to the metric defined by that functions is $\mathfrak{h} = \mathfrak{d} \ltimes \mathfrak{m}(1,2)$.
\end{example}
\subsection{Holonomy of Type III 2a}
\begin{prop}
The holonomy of $(M^{4,3},g)$ is contained in $\mathrm{G}_2^{*}$ and is of Type III 2a if and only if there are local coordinates $x_1,\ldots,x_7$ such that $g = 2(b^1\cdot b^5 + b^2 \cdot b^6 + b^3\cdot b^7) - (b^4)^2$, where
\begin{align*}
b^{1} &= \mathrm{d}{x_1} +  r_5 (x_1,x_2,x_4,x_5,x_6,x_7)\cdot \mathrm{d}{x_5} + r_6(x_2,x_5,x_6,x_7) \cdot \mathrm{d}{x_6}\\
b^{3} &= \mathrm{d}{x_3} + t_5 (x_5,x_6)\cdot \mathrm{d}{x_5} \\
b^{6} &= \mathrm{d}{x_6} + v(x_5,x_6)\cdot \mathrm{d}{x_5} \\
b^{i} &= \mathrm{d}{x_i} \qquad \qquad i = 2,4,5,7,
\end{align*}
with functions $r_5,r_6$ given by
\begin{align*}
r_5 &=-x_1\cdot (v)_{x_6}-x_2 \cdot v \cdot (v)_{x_6}-\sqrt{2} \cdot x_4 \cdot  (t_5)_{x_6} - x_7 \cdot v \cdot (t_5)_{x_6} + F_5(x_5,x_6) \\
r_6 &=-x_2 \cdot (v)_{x_6}  - x_7 \cdot (t_5)_{x_6} + F_6 (x_5,x_7)
\end{align*}
and $F_5, F_6$ are arbitrary.
\end{prop}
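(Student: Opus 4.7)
The plan is to follow exactly the three-step template laid out in the proofs of Propositions \ref{Th:Case1a}, \ref{Th:Case1b} and the subsequent Type III cases, but now with the connection 1-form constrained to take values in the algebra $\mathfrak{a}\ltimes\mathfrak{m}(1,2)$ with $\mathfrak{a}=\mathbb{R}\cdot\mathrm{diag}(1,0)$. Writing down Cartan's first structure equation this way, only one $\mathfrak{gl}(2,\mathbb{R})$-component $\mathbf{a}$ (the $(1,1)$-entry, the other three being forced to zero), together with $\mathbf{v},\mathbf{y}_1,\mathbf{y}_2$, can be nontrivial. In particular the equations for $\mathrm{d}b^{i}$ simplify considerably and supply the initial integrability information: $\mathrm{d}b^{7}=0$, while $\mathrm{d}b^{5}$ and $\mathrm{d}b^{6}$ both lie in the respective principal ideals $\mathcal{I}(b^{5})$ and $\mathcal{I}(b^{6})$ through the \emph{same} form $\mathbf{a}$.

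First I would exploit Poincaré/Frobenius to take $b^{7}=\mathrm{d}x_{7}$, and then apply the remaining freedom in the subgroup $\mathbb{R}\cdot\mathrm{diag}(1,0)\subset\mathrm{GL}(2,\mathbb{R})\hookrightarrow\mathrm{G}_{2}^{*}$ (a scalar rescaling) to normalize $b^{5}=\mathrm{d}x_{5}$. Because $\mathbf{a}$ controls both $\mathrm{d}b^{5}$ and $\mathrm{d}b^{6}$, one then finds $\mathbf{a}\in\mathcal{I}(\mathrm{d}x_{5})$ and, after adjusting the coordinate $x_{6}$, $b^{6}=\mathrm{d}x_{6}+v(x_{5},x_{6})\,\mathrm{d}x_{5}$. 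A translation by $\exp(h(0,\bar{v},0))$ brings $b^{4}$ to $\mathrm{d}x_{4}$ and forces $\mathbf{v}\in\mathcal{I}(\mathrm{d}x_{5})$. Translations by $\exp(h(0,0,\bar{y}))$ normalize $b^{2}=\mathrm{d}x_{2}$, and inspection of the resulting equation for $\mathrm{d}b^{3}$ (which now involves only $\mathbf{y}_{2}$ and $\mathbf{v}$, both in $\mathcal{I}(\mathrm{d}x_{5},\mathrm{d}x_{6})$) yields $b^{3}=\mathrm{d}x_{3}+t_{5}(x_{5},x_{6})\,\mathrm{d}x_{5}$. Finally, the equation for $\mathrm{d}b^{1}$ localizes $b^{1}=\mathrm{d}x_{1}+r_{5}\,\mathrm{d}x_{5}+r_{6}\,\mathrm{d}x_{6}$ with the dependencies recorded in the proposition, obtained by tracking in which ideals $\mathbf{a}$, $\mathbf{y}_{1}$, $\mathbf{y}_{2}$ lie.

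The second step is to feed this adapted coframe into the metric $g=2(b^{1}b^{5}+b^{2}b^{6}+b^{3}b^{7})-(b^{4})^{2}$, compute the Levi-Civita connection coefficients $\theta^{i}_{\;j}=b^{i}(\nabla b_{j})$ via Koszul's formula, and impose that $\theta$ actually take values in $\mathbb{R}\cdot\mathrm{diag}(1,0)\ltimes\mathfrak{m}(1,2)$. This means imposing the vanishing of the entries that do not correspond to $\mathbf{a},\mathbf{v},\mathbf{y}_{1},\mathbf{y}_{2}$ (so the $(3,3)$-block entry, the off-diagonal $\mathfrak{gl}(2)$-entries, and the symmetry relations coming from the fact that the second diagonal factor of $\mathrm{diag}(1,0)$ is zero). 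Together these translate into a linear PDE system in the unknowns $r_{5},r_{6},t_{5},v$. A direct integration (precisely as in the previous propositions) produces the explicit forms of $r_{5},r_{6}$ claimed in the statement, with the free functions $F_{5}(x_{5},x_{6})$ and $F_{6}(x_{5},x_{7})$ appearing as integration constants.

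For the converse and the closing holonomy-equality statement one invokes the Ambrose--Singer theorem: since the metric is real analytic, the infinitesimal holonomy at a point coincides with the holonomy, and one checks using Table \ref{Tab:RTypIII} together with a generic choice of $F_{5},F_{6},t_{5},v$ that $R(b_{i},b_{j})$ and its covariant derivatives span the full algebra $\mathbb{R}\cdot\mathrm{diag}(1,0)\ltimes\mathfrak{m}(1,2)$. The main obstacle I anticipate is bookkeeping rather than any single conceptual difficulty: the asymmetry between the $b^{2}/b^{6}$ pair (moved by $\mathbf{a}$) and the $b^{3}/b^{7}$ pair (inert under $\mathbf{a}$) must be handled carefully, since it is responsible for the slightly unusual coordinate dependencies ($t_{5}$ depending on $x_{5},x_{6}$ and $b^{6}$ rather than $b^{7}$ carrying the $v$-correction). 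Once the normalizations are ordered so as not to destroy one another, the remaining work is essentially routine exterior-algebra computation of the type carried out explicitly for Case III 1a.
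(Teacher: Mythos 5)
Your overall template---write the structure equations with the connection form valued in the target algebra, normalize the coframe using Frobenius and the residual gauge freedom in $\mathrm{GL}(2,\mathbb{R})$ and $\exp(h(0,\bar v,\bar y))$, then impose the membership of $\theta$ in the algebra as a PDE system and integrate---is exactly the paper's approach. However, you have misidentified the algebra for Case III 2a: it is $\mathbb{R}\cdot\mathrm{diag}(1,0)\ltimes\mathfrak{m}(1,1)$, not $\mathbb{R}\cdot\mathrm{diag}(1,0)\ltimes\mathfrak{m}(1,2)$ (the latter is Case III 2b, a separate proposition with a visibly different normal form). Concretely, in Case 2a the component $\mathbf{y}_2$ of the connection form is identically zero, so the structure equation reads $\mathrm{d}b^3=\mathbf{v}\wedge b^6$ with no $\mathbf{y}_2$-term, and the gauge translations available are only $\exp(h(0,0,(y_1,0)^{\top}))$. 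This is not bookkeeping: it is the reason the term $t_5(x_5,x_6)\,\mathrm{d}x_5$ survives in $b^3$ and cannot be gauged away, and it is the source of the extra PDEs $b^1(\nabla b_7)=0$, i.e. $(r_6)_{x_7}=-(t_5)_{x_6}$ and $(r_5)_{x_7}=v\cdot(r_6)_{x_7}$, which produce the $x_7$-dependent terms $-x_7\cdot v\cdot(t_5)_{x_6}$ and $-x_7\cdot(t_5)_{x_6}$ in the stated formulas for $r_5$ and $r_6$.

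Your proposal is in fact internally inconsistent on this point: if the algebra really were $\mathfrak{a}\ltimes\mathfrak{m}(1,2)$, then the full translation $\exp(h(0,0,(y_1,y_2)^{\top}))$ would be available and would normalize $b^3$ to $\mathrm{d}x_3$ (exactly as happens in the paper's Case 2b), contradicting the normal form you are trying to establish, and the condition $b^1(\nabla b_7)=0$ would not be imposed, so the claimed coordinate dependence of $r_5$ and $r_6$ on $x_7$ through $t_5$ would not emerge from the integration. To repair the argument, replace $\mathfrak{m}(1,2)$ by $\mathfrak{m}(1,1)$ throughout, delete $\mathbf{y}_2$ from the structure equations, restrict the $\bar y$-translation to $(y_1,0)^{\top}$, and add $b^1(\nabla b_7)=0$ to the list of connection-form relations alongside $b^1(\nabla b_1)=b^2(\nabla b_2)$, $b^1(\nabla b_4)=\sqrt{2}\,b^2(\nabla b_7)$ and $b^1(\nabla b_2)=0$; the rest of your outline then goes through as in the paper.
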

\begin{proof}
Obviously, the structure equation
\begin{align*}
\begin{pmatrix}
\mathrm{d}{b^{1}} \\ \mathrm{d}{b^{2}} \\ \mathrm{d}{b^{3}} \\ \mathrm{d}{b^{4}} \\ \mathrm{d}{b^{5}} \\ \mathrm{d}{b^{6}} \\ \mathrm{d}{b^{7}}
\end{pmatrix} =
-\begin{pmatrix}
\textbf{x} & 0 & 0 & \sqrt{2} \textbf{v} & 0 & -\textbf{y}_{1} & 0 \\
0 & \textbf{x} & 0 & 0  & \textbf{y}_{1}& 0 & \textbf{v} \\
0 & 0 & 0 & 0  & 0& - \textbf{v} & 0 \\
0 & 0 & 0 & 0 & \sqrt{2} \textbf{v}& 0 &0 \\
0 & 0 & 0 & 0 & -\textbf{x} & 0 &0 \\
0 & 0 & 0 & 0 & 0 & -\textbf{x} & 0 \\
0 & 0 & 0 & 0 & 0 & 0 & 0 \\
\end{pmatrix} \wedge 
\begin{pmatrix}
b^{1} \\ b^{2} \\ b^{3} \\ b^{4} \\ b^{5} \\ b^{6} \\ b^{7}
\end{pmatrix}
\end{align*}
 allows the choice of $b^7 = \mathrm{d}x_7$. Furthermore, it implies $b^5 = u \cdot \mathrm{d}x_5$ and $b^6 = v' \cdot \mathrm{d}x_6$ for appropriate functions $u,v'$ and local coordinates $x_5,x_6$. Changing the basis by an element 
\begin{align*}
 \mathrm{GL}(2,\mathbb{R}) \ni M = \begin{pmatrix} u^{-1} & 0 \\ 0 & 1 \end{pmatrix}  \cong \mathrm{diag}\left(u^{-1}, M, 1, u, (M^{\top})^{-1}\right) \in \mathrm{G}_2^{*} 
\end{align*}
one obtains $b^5 = \mathrm{d}x_5$ and $b^6 =\mathrm{d}x_5+ v \cdot \mathrm{d}x_5$ with a new function $v$. Since $\mathrm{d}b^5 = 0$, it follows that $\mathbf{x} \in \mathcal{I}(\mathrm{d}x_5)$. Thus, $v = v(x_5,x_6)$. 

Next, the structure equation allows the choice $b^4 = \mathrm{d}x_4 + q \cdot \mathrm{d}x_5$. Transforming with a suitable element $\exp(h(0,v,0))$ results in $b^4 = \mathrm{d}x_4$ and therefore $\mathbf{v} \in \mathcal{I}(\mathrm{d}x_5)$.

Similar, the equation for $\mathrm{d}b^3$ implies $\mathrm{d}b^3 \in \mathcal{I}(\mathrm{d}x_5)$, since $\mathbf{v} \in \mathcal{I}(\mathrm{d}x_5)$, which is why it is possible to choose $b^3 = \mathrm{d}x_3 + t_5 \cdot \mathrm{d}x_5$. Comparing both sides of 
\begin{align*}
\mathrm{d}{b^3} = \mathrm{d}{t_5} \wedge \mathrm{d}{x_5} = \mathbf{v} \wedge ( \mathrm{d}{x_6} + v \cdot \mathrm{d}{x_5})
\end{align*}
gives $t_5 = t_5(x_5,x_6)$. From the structure equation deduce $\mathrm{d}b^2 \in \mathcal{I}(\mathrm{d}x_5)$, as $\mathbf{x},\mathbf{v} \in \mathcal{I}(\mathrm{d}x_5)$. This allows the choice 
\begin{align*}
b^2 = \mathrm{d}x_2 + s_5 \cdot \mathrm{d}x_5
\end{align*}
and transforming the basis with an appropriate element $\exp(h(0,0,(y_1,0)^{\top}))$ leads to $b^2 = \mathrm{d}x_2$. 

Finally, the structure equation implies, since $b^6 \in \mathcal{I}(\mathrm{d}x_5,\mathrm{d}x_6)$ and $\mathbf{x},\mathbf{v}\in \mathcal{I}(\mathrm{d}x_5)$, $\mathrm{d}b^1 \in \mathcal{I}(\mathrm{d}x_5,\mathrm{d}x_6)$ such that the choice
\begin{align*}
b^1 = \mathrm{d}x_1 + r_5 \cdot \mathrm{d}x_5 + r_6 \cdot \mathrm{d}x_6 \hspace{.5em}.
\end{align*}
is allowed.
The dependencies of the functions $r_5,r_6$ on the local coordinates are due to
\begin{align*}
\mathrm{d}b^1 = \mathrm{d}r_5 \wedge \mathrm{d}x_5 + \mathrm{d}r_6 \wedge \mathrm{d}x_6 = -\mathbf{x} \wedge b^1 - \sqrt{2}\mathbf{v} \wedge b^4 + \mathbf{y}_1 \wedge b^6
\end{align*}
and result in $r_5 = r_5(x_1,x_2,x_4,x_5,x_6,x_7)$ and $r_6 = r_6(x_2,x_5,x_6,x_7)$.

Now, the metric $g$, with respect to the local coordinates, is used to calculate the components of the local connection form. The holonomy of $g$ is contained in $\mathbb{R}\cdot \mathrm{diag}(1,0) \ltimes \mathfrak{m}(1,1)$ if and only if the following relations between the components are satisfied
\begin{align*}
b^1 (\nabla b_1) & =  b^2 (\nabla b_2)\qquad
b^1 (\nabla b_4)  = \sqrt{2} b^2 (\nabla b_7) \qquad 
b^1 (\nabla b_j)  =  0 \quad j = 2,7 \hspace{.5em}.
\end{align*}
These relations form a system of partial differential equations given by\\[\baselineskip]
\noindent $b^1 (\nabla b_1)  =  b^2 (\nabla b_2) :$ \hfill
\begin{align*}
2 (r_5)_{x_1} &= -  (v)_{x_6} + (r_6)_{x_2} 
\end{align*}
$b^1 (\nabla b_4)  = \sqrt{2} b^2 (\nabla b_7) :$ \hfill
\begin{align*}
\sqrt{2} (r_5)_{x_4} &=   (r_6)_{x_7} - (t_5)_{x_6}
\end{align*}
$b^1 (\nabla b_2)  =  0 : $ \hfill
\begin{align*}
 (r_5)_{x_2} &=  v \cdot (r_6)_{x_2} \\
  (r_6)_{x_2} &= -(u)_{x_6}
  \end{align*}
  $b^1 (\nabla b_7)  =  0 : $ \hfill
\begin{align*}
 (r_5)_{x_7} &= v \cdot (r_6)_{x_7}\\
 (r_6)_{x_7} &=- (t_5)_{x_6} \hspace{.5em}.
   \end{align*}
   Straightforward substitution and integration lead to expressions for $r_5,r_6$ as stated in the theorem, while $v,t_5$ and the functions $F_5,F_6$, which arise by integration, are arbitrary.
\end{proof}
\begin{example}
It is easy to verify that with the choice 
\begin{align*}
v(x_5,x_6) = \frac{1}{2}x_6^2 \qquad t_5(x_5,x_6) = \frac{1}{6} x_6^3 \qquad F_5,F_6 \; \mathrm{arbitrary.}
\end{align*}
the holonomy of $(M,g)$ is $\mathfrak{h} = \mathbb{R}\cdot \mathrm{diag}(1,0) \ltimes \mathfrak{m}(1,1)$.
\end{example}
\subsection{Holonomy of Type III 2b}
\begin{prop}
The holonomy of $(M^{4,3},g)$ is contained in $\mathrm{G}_2^{*}$ and is of Type III 2b if and only if there are local coordinates $x_1,\ldots,x_7$ such that $g = 2(b^1\cdot b^5 + b^2 \cdot b^6 + b^3\cdot b^7) - (b^4)^2$, where
\begin{align*}
b^{1} &= \mathrm{d}{x_1} +  r_5 (x_1,x_2,x_4,x_5,x_6,x_7)\cdot \mathrm{d}{x_5}+  r_6 (x_2,x_5,x_6,x_7) \cdot \mathrm{d}{x_6} \\
b^{6} &= \mathrm{d}{x_6} + v(x_5,x_6)\cdot \mathrm{d}{x_5} \\
b^{i} &= \mathrm{d}{x_i} \qquad \qquad i = 2,3,4,5,7 \hspace{.5em},
\end{align*}
with functions $r_5, r_6$ of the form
\begin{align*}
r_5 &=\frac{x_4}{\sqrt{2}} \cdot (F_6)_{x_7}- x_2 \cdot v \cdot (v)_{x_6} - x_1 \cdot (v)_{x_6} + F_5(x_5,x_6,x_7) \\
r_6&= -x_2 \cdot(v)_{x_6} + F_6 (x_5,x_6,x_7)
\end{align*}
for arbitrary functions $v,F_5,F_6$.
\end{prop}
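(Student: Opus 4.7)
The plan is to adapt the three-stage strategy used in Proposition~\ref{Th:Case1a} and in the preceding case 2a to the algebra $\mathfrak{h}=\mathbb{R}\cdot\mathrm{diag}(1,0)\ltimes\mathfrak{m}(1,2)$. Writing the connection $1$-form as $\theta = h(\mathrm{diag}(\mathbf{x},0),\mathbf{v},(\mathbf{y}_1,\mathbf{y}_2))$ with values in $\mathfrak{h}$, the only algebraic change from case 2a is that the translation part is now the full $\mathfrak{m}(1,2)$, so both $\mathbf{y}_1$ and $\mathbf{y}_2$ appear in the structure equation. I would first carry out the coframe adaptation. From $\mathrm{d}b^7=0$ one gets $b^7=\mathrm{d}x_7$, and $\mathrm{d}b^5\in\mathcal{I}(b^5)$ together with a rescaling by $\mathrm{diag}(u^{-1},1)\in\mathrm{GL}(2,\mathbb{R})\subset \mathrm{G}_2^*$ yields $b^5=\mathrm{d}x_5$ and hence $\mathbf{x}\in\mathcal{I}(\mathrm{d}x_5)$. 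Frobenius applied to $\mathrm{d}b^6=\mathbf{x}\wedge b^6$ integrates $b^6=\mathrm{d}x_6+v(x_5,x_6)\mathrm{d}x_5$, and a conjugation by $\exp(h(0,v,0))$ normalises $b^4$ to $\mathrm{d}x_4$, forcing $\mathbf{v}\in\mathcal{I}(\mathrm{d}x_5)$.

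The key step at which 2b departs from 2a comes next: because $\exp(\mathfrak{m}(1,2))$ is two-parameter, a single conjugation by $\exp(h(0,0,(y_1,y_2)^\top))$ removes the $\mathrm{d}x_5$-coefficients from both $b^2$ and $b^3$ simultaneously, giving $b^2=\mathrm{d}x_2$, $b^3=\mathrm{d}x_3$ and placing $\mathbf{y}_1,\mathbf{y}_2\in\mathcal{I}(\mathrm{d}x_5,\mathrm{d}x_6,\mathrm{d}x_7)$. Expanding both sides of $\mathrm{d}b^1=-\mathbf{x}\wedge b^1-\sqrt{2}\mathbf{v}\wedge b^4+\mathbf{y}_1\wedge b^6+\mathbf{y}_2\wedge b^7$ then constrains $b^1=\mathrm{d}x_1+r_5\mathrm{d}x_5+r_6\mathrm{d}x_6$ with the stated dependencies of $r_5$ and $r_6$, the potential $\mathrm{d}x_7$-component being eliminable by a residual reparametrisation of $x_1$.

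In the remaining stages I would use the metric $g=2(b^1\cdot b^5+b^2\cdot b^6+b^3\cdot b^7)-(b^4)^2$ to compute the components $b^i(\nabla b_j)$ of the Levi-Civita connection and demand that $\theta$ lie in $\mathfrak{h}$. Inspecting the matrix pattern of $h(\mathrm{diag}(x,0),v,(y_1,y_2))$ shows that this requirement is equivalent to the relations $b^1(\nabla b_1)=b^2(\nabla b_2)$, $b^3(\nabla b_3)=0$, $b^1(\nabla b_4)=\sqrt{2}\,b^2(\nabla b_7)$ and $b^1(\nabla b_j)=0$ for $j\in\{2,3,7\}$, all remaining entries being automatic or trivial. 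These translate into a first-order PDE system for $v,r_5,r_6$ that can be solved by successive substitution and integration, producing the closed-form expressions of the statement with $v,F_5,F_6$ as the free functions. The converse direction is direct verification of the structure equation, and the holonomy containment then follows from the torsion-free $\mathfrak{h}$-structure characterisation recalled in Section~\ref{sec:2} together with real-analyticity of the resulting metric. The main obstacle I expect is the bookkeeping of the connection computation: matching the many entries of $\theta$ with those of $\nabla$ is lengthy, and the careful justification that the $\mathrm{d}x_7$-component of $b^1$ can indeed be absorbed via the residual gauge freedom is the most delicate point of the argument.
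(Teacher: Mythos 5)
Your overall strategy --- coframe adaptation via the structure equation, translation of ``$\theta$ takes values in $\mathfrak{h}$'' into relations among the components $b^i(\nabla b_j)$, and integration of the resulting PDE system --- is exactly the route the paper takes, and your coframe-adaptation stage is essentially correct. (The worry about a residual $\mathrm{d}x_7$-component of $b^1$ is unnecessary: comparing both sides of $\mathrm{d}b^3=0$ gives $\mathbf{y}_2\in\mathcal{I}(\mathrm{d}x_5,\mathrm{d}x_6)$, so $\mathbf{y}_2\wedge b^7$ already lies in $\mathcal{I}(\mathrm{d}x_5,\mathrm{d}x_6)$ and no residual gauge absorption is needed.)

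There is, however, a genuine error in your list of connection relations: you impose $b^1(\nabla b_7)=0$. In the matrix $h(\mathrm{diag}(x,0),v,(y_1,y_2))$ the $(1,7)$-entry is $-y_2$, and in case 2b the translation part is the full $\mathfrak{m}(1,2)$, so $y_2$ is a free parameter and $\theta^1_7=-\mathbf{y}_2$ is \emph{not} required to vanish; that vanishing is precisely the condition that distinguishes case 2a (with $\mathfrak{m}(1,1)$) from case 2b. The correct list is $b^1(\nabla b_1)=b^2(\nabla b_2)$, $b^1(\nabla b_4)=\sqrt{2}\,b^2(\nabla b_7)$ and $b^1(\nabla b_2)=0$; the additional relations $b^3(\nabla b_3)=0$ and $b^1(\nabla b_3)=0$ that you include are harmless since they are identically satisfied for the adapted coframe. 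If you carry your extra condition through the computation it produces $(r_6)_{x_7}=0$ and $(r_5)_{x_7}=v\cdot(r_6)_{x_7}=0$, which forces $(F_6)_{x_7}=0$ and kills the $x_4$-term of $r_5$. The resulting normal form is strictly more restrictive than the one in the statement (and would not allow the holonomy to contain all of $\mathfrak{m}(1,2)$), so the proof as proposed would not establish the proposition.
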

\begin{proof}
The structure equation
\begin{align*}
\begin{pmatrix}
\mathrm{d}{b^{1}} \\ \mathrm{d}{b^{2}} \\ \mathrm{d}{b^{3}} \\ \mathrm{d}{b^{4}} \\ \mathrm{d}{b^{5}} \\ \mathrm{d}{b^{6}} \\ \mathrm{d}{b^{7}}
\end{pmatrix} =
-\begin{pmatrix}
\textbf{x} & 0 & 0 & \sqrt{2} \textbf{v} & 0 & -\textbf{y}_{1} & -\textbf{y}_{2} \\
0 & \textbf{x} & 0 & 0  & \textbf{y}_{1}& 0 & \textbf{v} \\
0 & 0 & 0 & 0  & \textbf{y}_{2}& - \textbf{v} & 0 \\
0 & 0 & 0 & 0 & \sqrt{2} \textbf{v}& 0 &0 \\
0 & 0 & 0 & 0 & -\textbf{x} & 0 &0 \\
0 & 0 & 0 & 0 & 0 & -\textbf{x} & 0 \\
0 & 0 & 0 & 0 & 0 & 0 & 0 \\
\end{pmatrix} \wedge 
\begin{pmatrix}
b^{1} \\ b^{2} \\ b^{3} \\ b^{4} \\ b^{5} \\ b^{6} \\ b^{7}
\end{pmatrix}
\end{align*}
 allows the choice of $b^7 = \mathrm{d}x_7$. Next,  the equations for $\mathrm{d}b^5$ and $\mathrm{d}b^6$ imply $b^5 = u \cdot \mathrm{d}x_5$ and $b^6 = v' \cdot \mathrm{d}x_6$. Transforming the basis with an element
\begin{align*}
\mathrm{GL}(2,\mathbb{R}) \ni M = \begin{pmatrix} u^{-1} & 0 \\ 0 & 1 \end{pmatrix} \cong \mathrm{diag}\left(u^{-1}, M, 1, u, (M^{\top})^{-1}\right) \in \mathrm{G}_2^{*}
\end{align*}
yields $b^5 = \mathrm{d}x_5$, $b^6 = \mathrm{d}x_6+ v \cdot \mathrm{d}x_5$ and $\mathbf{x} \in \mathcal{I}(\mathrm{d}x_5)$, since $\mathrm{d}b^5 = 0$. It follows $v = v(x_5,x_6)$.

From the structure equation deduce $\mathrm{d}b^4 \in \mathcal{I}(\mathrm{d}x_5)$, which is why the choice $b^4 = \mathrm{d}x_4 + q \cdot \mathrm{d}x_5$ is feasible. A change of basis with an appropriate element $\exp(h(0,v,0))$ leads to $b^4 = \mathrm{d}x_4$. Consequently, one has $\mathbf{v} \in \mathcal{I}(\mathrm{d}x_5)$, as $\mathrm{d}b^4 = 0$.

Next, the structure equation implies $\mathrm{d}b^2,\mathrm{d}b^3 \in \mathcal{I}(\mathrm{d}x_5)$, since $\mathbf{x},\mathbf{v} \in  \mathcal{I}(\mathrm{d}x_5)$ and $b^5 = \mathrm{d}x_5$. Then, 
\begin{align*}
b^2 = \mathrm{d}x_2 + s_5 \cdot \mathrm{d}x_5, \qquad b^3 = \mathrm{d}x_3 + t_5 \cdot \mathrm{d}x_5
\end{align*}
and after transforming with a suitable element $\exp(h(0,0,(y_1,y_2)^{\top}))$ obtain $b^2 = \mathrm{d}x_2$ and $b^3 =\mathrm{d}x_3$. This implicates $\mathbf{y}_1 \in \mathcal{I}(\mathrm{d}x_2,\mathrm{d}x_5,\mathrm{d}x_6)$ and $\mathbf{y}_2 \in \mathcal{I}(\mathrm{d}x_5,\mathrm{d}x_6)$.

Finally, as  $\mathbf{v} \in \mathcal{I}(\mathrm{d}x_5)$,  $\mathbf{y}_2 \in \mathcal{I}(\mathrm{d}x_5,\mathrm{d}x_6)$ and $b^6 = \mathrm{d}x_6 + v\cdot \mathrm{d}x_5$, one has $\mathrm{d}b^1 \in \mathcal{I}(\mathrm{d}x_5,\mathrm{d}x_6)$ by the structure equation. Thus,
\begin{align*}
b^1 = \mathrm{d}x_1 + r_5 \cdot \mathrm{d}x_5 + r_6 \cdot \mathrm{d}x_6
\end{align*}
and from the equation
\begin{align*}
\mathrm{d}{b^{1}} &= \mathrm{d}{r_5} \wedge \mathrm{d}{x_5} + \mathrm{d}{r_6} \wedge \mathrm{d}{x_6} =  - \textbf{x} \wedge b^{1} -\sqrt{2}\mathbf{v} \wedge b^{4} + \textbf{y}_1 \wedge b^{6} + \textbf{y}_2 \wedge b^{7} 
\end{align*}
obtain $r_5 = r_5(x_1,x_2,x_4,x_5,x_6,x_7)$ and $r_6 = r_6(x_2,x_5,x_6,x_7)$.

Consider the metric $g$ with respect to the local coordinates given as above. The holonomy of $g$ is contained in $\mathbb{R}\cdot \mathrm{diag}(1,0)\ltimes \mathfrak{m}(1,2)$ if and only if the following relations between the components of the local connection form hold:
\begin{align*}
b^1 (\nabla b_1) & =  b^2 (\nabla b_2),\qquad
b^1 (\nabla b_4)  = \sqrt{2} b^2 (\nabla b_7), \qquad
b^1 (\nabla b_2)  =  0 \hspace{.5em}.
\end{align*}
The system of partial differential equations associated with these relations is given by\\[\baselineskip]
\noindent $b^1 (\nabla b_1)  =  b^2 (\nabla b_2) :$ \hfill
\begin{align*}
2(r_5)_{x_1} &= - (v)_{x_6} + (r_6)_{x_2}
\end{align*}
$b^1 (\nabla b_4)  = \sqrt{2} b^2 (\nabla b_7) :$ \hfill
\begin{align*}
 \sqrt{2} (r_5)_{x_4} &= (r_6)_{x_7}
 \end{align*}
 $b^1 (\nabla b_2)  =  0 :$ \hfill
\begin{align*}
 - (v)_{x_6} &= (r_6)_{x_2}\\
  (r_5)_{x_2} &= v \cdot (r_6)_{x_2} \hspace{.5em}.
 \end{align*}
Again, the system can be solved straightforward by substitution and integration in order to obtain the form of the functions $r_5,r_6$ as stated in the theorem. The new functions $F_5,F_6$, arising due to integration, and the function $v$ are arbitrary.
\end{proof}
\begin{example}
By simple calculations one can check that the choice 
\begin{align*}
v(x_5,x_6 ) =\frac{1}{2} x_6^2 \qquad F_6(x_5,x_6,x_7) = -x_7^2 \qquad F_5 = 0
\end{align*}
leads to a metric with holonomy $\mathfrak{h} = \mathbb{R} \cdot \mathrm{diag}(1,0) \ltimes \mathfrak{m}(1,2)$.
\end{example}
\subsection{Holonomy of Type III 2c}
\begin{prop}
The holonomy of $(M^{4,3},g)$ is contained in $\mathrm{G}_2^{*}$ and is of Type III 2c if and only if there are local coordinates $x_1,\ldots,x_7$ such that $g = 2(b^1\cdot b^5 + b^2 \cdot b^6 + b^3\cdot b^7) - (b^4)^2$, where
\begin{align*}
b^{1} &= \mathrm{d}{x_1} + r_5 (x_4,x_5,x_6,x_7) \cdot \mathrm{d}{x_5} + r_7(x_5,x_6,x_7)\cdot \mathrm{d}{x_7}\\
b^{3} &= \mathrm{d}{x_3} + t_5(x_5,x_6)   \cdot  \mathrm{d}{x_5}\\
b^{i} &= \mathrm{d}{x_i} \qquad \qquad i = 2,4,5,6,7 \hspace{.5em}, 
\end{align*}
with functions $r_5,r_7$ given by 
\begin{align*}
r_5 &= - \sqrt{2} \cdot x_4 \cdot (t_5)_{x_6} + F_5 (x_5,x_6,x_7) \\
r_7 &=t_5 + F_7(x_5,x_7)
\end{align*}
for arbitrary functions $t_5,F_5,F_7$ which fulfil
\begin{align*}
(F_5)_{x_7} = (t_5)_{x_5} + (F_7)_{x_5} \hspace{.5em}.
\end{align*}
\end{prop}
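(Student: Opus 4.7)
The plan mirrors the pattern of the Propositions for cases 2a and 2b, specialised to the algebra $\mathfrak{h} = \mathfrak{m}(1,1)$, which is the two-dimensional abelian subalgebra of Type III 2c: in the connection-form matrix every $\mathbf{a}$-block entry as well as $\mathbf{y}_2$ vanishes, leaving only $\mathbf{v}$ and $\mathbf{y}_1$ as nonzero connection forms. First, rows $5,6,7$ of the structure equation give $\mathrm{d}b^5 = \mathrm{d}b^6 = \mathrm{d}b^7 = 0$, so by Poincar\'e I can take $b^i = \mathrm{d}x_i$ for $i = 5,6,7$. Row $4$ gives $\mathrm{d}b^4 = -\sqrt{2}\,\mathbf{v} \wedge b^5 \in \mathcal{I}(\mathrm{d}x_5)$, so $b^4 = \mathrm{d}x_4 + q\,\mathrm{d}x_5$; a transformation by $\exp(h(0,v,0))$ using \eqref{Eq:Conj_v} absorbs $q$, giving $b^4 = \mathrm{d}x_4$ and forcing $\mathbf{v} \in \mathcal{I}(\mathrm{d}x_5)$.

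Next I would handle $b^3$ and $b^2$. Row $3$ gives $\mathrm{d}b^3 = \mathbf{v} \wedge b^6 \in \mathcal{I}(\mathrm{d}x_5,\mathrm{d}x_6)$, so $b^3 = \mathrm{d}x_3 + t_5(x_5,x_6)\,\mathrm{d}x_5$ after absorbing any $\mathrm{d}x_6$-coefficient into a redefinition of $x_3$. Row $2$ yields $\mathrm{d}b^2 \in \mathcal{I}(\mathrm{d}x_5,\mathrm{d}x_7)$, and a transformation by $\exp(h(0,0,(y_1,0)^\top))$ following \eqref{Eq:Conj_y} reduces $b^2$ to $\mathrm{d}x_2$. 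Finally, row $1$ gives $\mathrm{d}b^1 \in \mathcal{I}(\mathrm{d}x_4, \mathrm{d}x_5, \mathrm{d}x_6, \mathrm{d}x_7)$; since the $\mathbf{a}$-block vanishes, all coefficients of $b^1$ are $x_1$-independent, and a redefinition of $x_1$ absorbs the $\mathrm{d}x_6$-coefficient, leaving $b^1 = \mathrm{d}x_1 + r_5\,\mathrm{d}x_5 + r_7\,\mathrm{d}x_7$; the stated dependencies of $r_5, r_7$ then follow from matching both sides of the $\mathrm{d}b^1$ equation.

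With the adapted frame in hand, the requirement that $\theta^i_j = b^i(\nabla b_j)$ takes values in $\mathfrak{m}(1,1)$ translates, exactly as in cases 2a and 2b, into explicit linear relations on the components of the Levi-Civita connection. Unpacking these yields a small PDE system on $r_5, r_7, t_5$; straightforward substitution and integration give the claimed forms $r_5 = -\sqrt{2}\,x_4\,(t_5)_{x_6} + F_5(x_5,x_6,x_7)$ and $r_7 = t_5 + F_7(x_5,x_7)$ with $t_5, F_5, F_7$ otherwise arbitrary. The single compatibility $(F_5)_{x_7} = (t_5)_{x_5} + (F_7)_{x_5}$ emerges as the integrability condition, equivalently the vanishing of the $\mathrm{d}x_5 \wedge \mathrm{d}x_7$-component of $\mathrm{d}b^1$ after the ansatz is substituted. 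The main obstacle is not conceptual but the careful bookkeeping of how each $H$-transformation affects the remaining basis elements; a final direct curvature computation together with Ambrose--Singer then confirms that the holonomy is indeed the prescribed $\mathfrak{m}(1,1) \subset \mathfrak{h}^{I\!I\!I}$.
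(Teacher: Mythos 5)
Your proposal is correct and follows essentially the same route as the paper: adapt the coframe successively via the structure equations and $H$-transformations, then impose that the connection form takes values in $\mathfrak{m}(1,1)$ and integrate the resulting PDE system to obtain $r_5$, $r_7$ and the compatibility condition $(F_5)_{x_7}=(t_5)_{x_5}+(F_7)_{x_5}$. The only (harmless) imprecision is that several of your ideal containments are weaker than what actually holds --- e.g.\ $\mathrm{d}b^2,\mathrm{d}b^3\in\mathcal{I}(\mathrm{d}x_5)$ and $\mathrm{d}b^1\in\mathcal{I}(\mathrm{d}x_5,\mathrm{d}x_7)$ already follow from $\mathbf{v}\in\mathcal{I}(\mathrm{d}x_5)$ --- which is precisely what makes the single $\exp(h(0,0,(y_1,0)^{\top}))$-transformation suffice and renders your ad hoc redefinitions of $x_1$ and $x_3$ unnecessary.
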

\begin{proof}
Obviously, the structure equation
\begin{align*}
\begin{pmatrix}
\mathrm{d}{b^{1}} \\ \mathrm{d}{b^{2}} \\ \mathrm{d}{b^{3}} \\ \mathrm{d}{b^{4}} \\ \mathrm{d}{b^{5}} \\ \mathrm{d}{b^{6}} \\ \mathrm{d}{b^{7}}
\end{pmatrix} =
-\begin{pmatrix}
0 & 0 & 0 & \sqrt{2} \textbf{v} & 0 & -\textbf{y}_{1} & 0 \\
0 & 0 & 0 & 0  & \textbf{y}_{1}& 0 & \textbf{v} \\
0 & 0 & 0 & 0  & 0& - \textbf{v} & 0 \\
0 & 0 & 0 & 0 & \sqrt{2} \textbf{v}& 0 &0 \\
0 & 0 & 0 & 0 & 0 & 0 &0 \\
0 & 0 & 0 & 0 & 0 & 0 & 0 \\
0 & 0 & 0 & 0 & 0 & 0 & 0 \\
\end{pmatrix} \wedge 
\begin{pmatrix}
b^{1} \\ b^{2} \\ b^{3} \\ b^{4} \\ b^{5} \\ b^{6} \\ b^{7}
\end{pmatrix}
\end{align*}
 allows the choices $b^5 = \mathrm{d}x_5$, $b^6 = \mathrm{d}x_6$, $b^7 = \mathrm{d}x_7$. Furthermore, $\mathrm{d}b^4 \in \mathcal{I}(\mathrm{d}x_5)$ such that $b^4 = \mathrm{d}x_4 + q \cdot \mathrm{d}x_5$ can be chosen. A change of basis with an element $\exp(h(0,v,0))$ yields $b^4 = \mathrm{d}x_4$, $\mathrm{d}b^4 = 0$ and $\mathbf{v} \in \mathcal{I}(\mathrm{d}x_5)$. Using the latter, the structure equation implies $\mathrm{d}b^3 \in \mathcal{I}(\mathrm{d}x_5)$. Thus, choose $b^3 = \mathrm{d}x_3 + t_5 \cdot \mathrm{d}x_5$ and by comparing the expressions for $\mathrm{d}b^3$ we have $t_5 =t_5(x_5,x_6)$.

Next, from the structure equation deduce $\mathrm{d}b^2 \in \mathcal{I}(\mathrm{d}x_5)$ which leads to $b^2 = \mathrm{d}x_2 + s_5 \cdot \mathrm{d}x_5$. After a transformation with a suitable element $\exp(h(0,0,(y_1,0)^{\top}))$ obtain $b^2 = \mathrm{d}x_2$ and $\mathbf{y}_1 \in \mathcal{I}(\mathrm{d}x_5,\mathrm{d}x_7)$. 

Lastly, $\mathrm{d}b^1 \in \mathcal{I}(\mathrm{d}x_5,\mathrm{d}x_7)$ by the structure equation and $\mathbf{y}_1 \in \mathcal{I}(\mathrm{d}x_5,\mathrm{d}x_7)$. Thus,
\begin{align*}
b^1 = \mathrm{d}x_1+ r_5 \cdot \mathrm{d}x_5 + r_7 \cdot \mathrm{d}x_7
\end{align*}
and by comparing both sides of 
\begin{align*}
\mathrm{d}{b}^1 = \mathrm{d}{r}_5 \wedge \mathrm{d}{x_5} + \mathrm{d}{r}_7 \wedge \mathrm{d}{x}_7 = -\sqrt{2}\mathbf{v} \wedge b^4 + \mathbf{y}_1 \wedge b^6
\end{align*}
leads to $r_5 =r_5 (x_4,x_5,x_6,x_7)$ and $r_7 = r_7(x_5,x_6,x_7)$.

With respect to the local coordinates, the metric $g$ has holonomy contained in $\mathfrak{m}(1,1)$ if and only if the following relations between the components of the local connection form are satisfied:
\begin{align*}
 b^1 (\nabla b_7) &= 0, \qquad b^1 (\nabla b_4)  = \sqrt{2}\cdot b^2 (\nabla b_7) \hspace{.5em}.
\end{align*}
This is equivalent to the partial differential system\\[\baselineskip]
\noindent $b^1 (\nabla b_4) = \sqrt{2} \cdot b^2 (\nabla b_7) : $ \hfill
\begin{align*}
 \sqrt{2} \cdot (r_5)_{x_4} &= - (r_7)_{x_6} - (t_5)_{x_6} 
 \end{align*}
  $ b^1 (\nabla b_7) = 0 :$ \hfill
\begin{align*} 
(r_7)_{x_6} &= (t_5)_{x_6} \\
(r_7)_{x_5} &= (r_5)_{x_7}\hspace{.5em}.
\end{align*}
Solving the system by integration and substitution give rise to the dependencies of $r_5,r_6$ on $t_5$ and new functions $F_5,F_7$ as proposed in the proposition. The latter functions are arbitrary.
\end{proof}
\begin{example}
With the choice
\begin{align*}
t_5(x_5,x_6) = \frac{1}{2}x_6^2 \qquad F_5 = F_7 = 0
\end{align*}
it is simple to check that the holonomy of the corresponding metric $g$ is $\mathfrak{h} = \mathfrak{m}(1,1)$.
\end{example}
\subsection{Holonomy of Type III 2d}
\begin{prop}
The holonomy of $(M^{4,3},g)$ is contained in $\mathrm{G}_2^{*}$ and is of Type III 2d if and only if there are local coordinates $x_1,\ldots,x_7$ such that $g = 2(b^1\cdot b^5 + b^2 \cdot b^6 + b^3\cdot b^7) - (b^4)^2$, where
\begin{align*}
b^{1} &=  \mathrm{d}{x_1} + r_5 (x_4,x_5,x_6,x_7)\cdot \mathrm{d}{x_5}+  r_6 (x_5,x_6,x_7)\cdot \mathrm{d}{x_6}\\
b^{i} &= \mathrm{d}{x_i} \qquad \qquad i = 2,\ldots,7 \hspace{.5em},
\end{align*}
with $r_5$ is of the form
\begin{align*}
r_5 &=\frac{1}{ \sqrt{2}} \cdot x_4 \cdot (r_6)_{x_7}  + F(x_5,x_6,x_7)
\end{align*}
for arbitrary functions $r_6$ and $F$.
\end{prop}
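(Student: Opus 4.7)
The argument will follow the standard three-step machinery employed for all preceding Type III cases, specialised to the holonomy algebra $\mathfrak{h} = \mathfrak{m}(1,2)$. Since $\mathfrak{h}$ contains no $\mathfrak{gl}(2,\mathbb{R})$-part, every $\mathbf{a}_i$ in the connection form vanishes identically, leaving only $\mathbf{v}, \mathbf{y}_1, \mathbf{y}_2$; this renders the structure equation the sparsest of all the cases treated in the section.

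For the first step I construct a preferred coframe by decreasing degree. The relations $\mathrm{d}b^5 = \mathrm{d}b^6 = \mathrm{d}b^7 = 0$ and the Poincar\'e lemma immediately give $b^i = \mathrm{d}x_i$ for $i = 5,6,7$. The equation for $\mathrm{d}b^4$ then forces $b^4 \in \mathcal{I}(\mathrm{d}x_4,\mathrm{d}x_5)$, and conjugation by a suitable $\exp(h(0,v,0))$ normalises $b^4 = \mathrm{d}x_4$ and pins down $\mathbf{v} \in \mathcal{I}(\mathrm{d}x_5)$. Consequently $\mathrm{d}b^2,\mathrm{d}b^3 \in \mathcal{I}(\mathrm{d}x_5)$, so a transformation by $\exp(h(0,0,(y_1,y_2)^{\top}))$ yields $b^2 = \mathrm{d}x_2$, $b^3 = \mathrm{d}x_3$; imposing $\mathrm{d}b^2 = \mathrm{d}b^3 = 0$ with the known form of $\mathbf{v}$ further constrains $\mathbf{y}_1 \in \mathcal{I}(\mathrm{d}x_5,\mathrm{d}x_7)$ and $\mathbf{y}_2 \in \mathcal{I}(\mathrm{d}x_5,\mathrm{d}x_6)$. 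Finally $\mathrm{d}b^1 \in \mathcal{I}(\mathrm{d}x_4,\mathrm{d}x_5,\mathrm{d}x_6,\mathrm{d}x_7)$, so for a suitable $x_1$ we may write $b^1 = \mathrm{d}x_1 + r_4\mathrm{d}x_4 + r_5\mathrm{d}x_5 + r_6\mathrm{d}x_6 + r_7\mathrm{d}x_7$; a shift $x_1 \mapsto x_1 + \Phi(x_4,x_5,x_6,x_7)$, well-defined because the mixed-partial compatibility $(r_4)_{x_7} = (r_7)_{x_4}$ and $(r_4)_{x_6} = (r_6)_{x_4}$ is enforced by the structure equation (absence of $\mathrm{d}x_4\wedge\mathrm{d}x_6$ and $\mathrm{d}x_4\wedge\mathrm{d}x_7$ terms on the right-hand side), absorbs the $r_4$- and $r_7$-pieces, leaving $b^1 = \mathrm{d}x_1 + r_5\mathrm{d}x_5 + r_6\mathrm{d}x_6$. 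Comparing coefficients in $\mathrm{d}b^1 = \mathrm{d}r_5\wedge\mathrm{d}x_5 + \mathrm{d}r_6\wedge\mathrm{d}x_6$ with the right-hand side reads off the dependencies $r_5 = r_5(x_4,x_5,x_6,x_7)$ and $r_6 = r_6(x_5,x_6,x_7)$.

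For steps (ii) and (iii), requiring the Levi-Civita connection form to lie in $\mathfrak{m}(1,2)$ amounts to the vanishing of its $\mathfrak{gl}(2,\mathbb{R})$-components (already guaranteed by the chosen coframe) together with the single identification $b^1(\nabla b_4) = \sqrt{2}\,b^2(\nabla b_7)$, matching the two $\mathbf{v}$-entries in \eqref{Eq:MatrixElements}. A direct calculation of the two connection-form components for $g$ in the above coordinates reduces this to the single PDE $\sqrt{2}(r_5)_{x_4} = (r_6)_{x_7}$, which integrates at once in $x_4$ to $r_5 = \frac{1}{\sqrt{2}}\,x_4\,(r_6)_{x_7} + F(x_5,x_6,x_7)$ with $r_6,F$ free. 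Conversely, for any such $r_6,F$ one reads off the curvature from Table \ref{Tab:RTypIII} and verifies that its span is precisely $\mathfrak{m}(1,2)$, so by Ambrose--Singer together with real-analyticity the full holonomy equals $\mathfrak{m}(1,2)$. The only mildly delicate point in the plan is the coordinate shift eliminating the $r_4$ and $r_7$ pieces in $b^1$; everything else is appreciably simpler than in the corresponding computation for cases 1a--1d, precisely because $\mathfrak{m}(1,2)$ is the smallest Type III algebra.
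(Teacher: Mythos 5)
Your proposal is correct and follows essentially the same route as the paper: normalise the coframe step by step from the structure equation (your detour through $r_4,r_7$ and a shift of $x_1$ is harmless, though the paper gets $\mathrm{d}b^1\in\mathcal{I}(\mathrm{d}x_5,\mathrm{d}x_6)$ directly from $\mathbf{y}_2\in\mathcal{I}(\mathrm{d}x_5,\mathrm{d}x_6)$, which you in fact state correctly where the paper's text has an apparent typo $\mathcal{I}(\mathrm{d}x_5,\mathrm{d}x_7)$), then reduce membership of the connection form in $\mathfrak{m}(1,2)$ to the single relation $b^1(\nabla b_4)=\sqrt{2}\,b^2(\nabla b_7)$, i.e.\ $\sqrt{2}(r_5)_{x_4}=(r_6)_{x_7}$, and integrate in $x_4$. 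The one misstep is your closing claim that for \emph{any} $r_6,F$ the curvature spans all of $\mathfrak{m}(1,2)$: taking $r_6=F=0$ yields the flat metric with trivial holonomy, so equality with $\mathfrak{m}(1,2)$ holds only for suitable choices (the paper supplies one in the subsequent example); since the proposition asserts only containment, this overstatement does not affect the statement actually being proved.
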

\begin{proof}
The structure equation
\begin{align*}
\begin{pmatrix}
\mathrm{d}{b^{1}} \\ \mathrm{d}{b^{2}} \\ \mathrm{d}{b^{3}} \\ \mathrm{d}{b^{4}} \\ \mathrm{d}{b^{5}} \\ \mathrm{d}{b^{6}} \\ \mathrm{d}{b^{7}}
\end{pmatrix} =
-\begin{pmatrix}
0 & 0 & 0 & \sqrt{2} \textbf{v} & 0 & -\textbf{y}_{1} & -\textbf{y}_{2} \\
0 & 0 & 0 & 0  & \textbf{y}_{1}& 0 & \textbf{v} \\
0 & 0 & 0 & 0  & \textbf{y}_{2}& - \textbf{v} & 0 \\
0 & 0 & 0 & 0 & \sqrt{2} \textbf{v}& 0 &0 \\
0 & 0 & 0 & 0 & 0 & 0 &0 \\
0 & 0 & 0 & 0 & 0 & 0 & 0 \\
0 & 0 & 0 & 0 & 0 & 0 & 0 \\
\end{pmatrix} \wedge 
\begin{pmatrix}
b^{1} \\ b^{2} \\ b^{3} \\ b^{4} \\ b^{5} \\ b^{6} \\ b^{7}
\end{pmatrix}
\end{align*}
 allows the choice of $b^5 = \mathrm{d}x_5$, $b^6 = \mathrm{d}x_6$, $b^7 = \mathrm{d}x_7$. As a consequence one has $\mathrm{d}b^4 \in \mathcal{I}(\mathrm{d}x_5)$ such that one can choose $b^4 = \mathrm{d}x_4 +q  \cdot \mathrm{d}x_5$. A change of basis with a suitable element $\exp(h(0,v,0))$ leads to $b^4 = \mathrm{d}x_4$ and $\mathbf{v} \in \mathcal{I}(\mathrm{d}x_5)$, since $\mathrm{d}b^4 = 0$. 

Next, using $\mathbf{v} \in \mathcal{I}(\mathrm{d}x_5)$, the structure equation implies $\mathrm{d}b^2,\mathrm{d}b^3 \in \mathcal{I}(\mathrm{d}x_5)$ such that the assumption $b^2 = \mathrm{d}x_2 + s_5 \cdot \mathrm{d}x_5$ and $b^3 = \mathrm{d}x_3 + t_5 \cdot \mathrm{d}x_5$ is possible. After a transformation using an appropriate element $\exp(h(0,0,(y_1,y_2)^{\top}))$ obtain $b^2 = \mathrm{d}x_2$ and $b^3 = \mathrm{d}x_3$. Consequently, $\mathbf{y}_1 \in \mathcal{I}(\mathrm{d}x_5,\mathrm{d}x_7)$ and $\mathbf{y}_2 \in \mathcal{I}(\mathrm{d}x_5,\mathrm{d}x_7)$ by the structure equation.

Finally, this implies $\mathrm{d}b^1 \in \mathcal{I}(\mathrm{d}x_5,\mathrm{d}x_6)$, since $\mathbf{v} \in \mathcal{I}(\mathrm{d}x_5)$ and $b^6 =\mathrm{d}x_6$. Thus,
\begin{align*}
b^1 = \mathrm{d}x_1 + r_5 \cdot \mathrm{d}x_5 + r_6 \cdot \mathrm{d}x_6
\end{align*}
and comparing both sides of
\begin{align*}
\mathrm{d}{b^{1}} &= \mathrm{d}{r_5} \wedge \mathrm{d}{x_5}+ \mathrm{d}{r_6} \wedge \mathrm{d}{x_6}  =  -\sqrt{2}\mathbf{v} \wedge b^{4} + \mathbf{y}_1 \wedge b^{6} + \mathbf{y}_2 \wedge b^{7}
\end{align*}
yields $r_5 =r_5(x_4,x_5,x_6,x_7)$ and $r_6 = r_6(x_5,x_6,x_7)$.

A metric $g$ as given by the proposition has, with respect to the local coordinates, holonomy contained in $\mathfrak{m}(1,2)$ if and only if the relation
\begin{align*}
b^1 (\nabla b_4) & = \sqrt{2} b^2 (\nabla b_7)
\end{align*}
between the components of the local connection form is satisfied. This results in the partial differential equation 
\begin{align*}
\sqrt{2} (r_5)_{x_4} = (r_6)_{x_7} 
\end{align*}
which is, by integration, equivalent to the stated dependence of $r_5$ on $r_6$ and a new function $F$. 
\end{proof}
\begin{example}
It is easy to check that the metric $g$ associated with the choice
\begin{align*}
r_6(x_5,x_6,x_7) = -2\cdot x_6 x_7 \qquad F(x_5,x_6,x_7) =\frac{1}{2} x_7^2 
\end{align*}
has holonomy $\mathfrak{h} = \mathfrak{m}(1,2)$.
\end{example}
\begin{remark}
The algebras of Type III 2c and 2d are abelian and of dimension two or three. In \cite{Kath2013} indefinite symmetric spaces were classified and it turned out that their holonomy algebras are also abelian and of dimension two or three. They indeed coincide, which can be seen by an appropriate change in notation. Thus, the algebras of Type III 2c and 2d are holonomy algebras of (locally) symmetric spaces.
\end{remark}
\bibliographystyle{amsplain}
\bibliography{Bibliography_TypeIII}   
\end{document}